\documentclass{amsart}
\usepackage[foot]{amsaddr}
\usepackage{amssymb,amsmath,amsfonts,mathptmx,cite,mathrsfs,url}
\usepackage{eucal}

\DeclareSymbolFont{rsfscript}{OMS}{rsfs}{m}{n}
\DeclareSymbolFontAlphabet{\mathrsfs}{rsfscript}

\newcommand{\sgps}{semi\-groups}
\newcommand{\mA}{\mathcal{A}}
\newcommand{\mS}{\mathcal{S}}
\newcommand{\mG}{\mathcal{G}}
\newcommand{\mH}{\mathcal{H}}

\newcommand{\gL}{\mathrsfs{L}}
\newcommand{\gR}{\mathrsfs{R}}
\newcommand{\gH}{\mathrsfs{H}}
\newcommand{\gD}{\mathrsfs{D}}
\newcommand{\gJ}{\mathrsfs{J}}
\newcommand{\K}{\mathrsfs{K}}

\DeclareMathOperator{\alf}{alph}

\usepackage{tikz}
\usetikzlibrary{arrows}
\usetikzlibrary{arrows,shapes}

\newtheorem{theorem}{Theorem}[section]

\newtheorem{proposition}[theorem]{Proposition}
\newtheorem{lemma}[theorem]{Lemma}
\newtheorem{corollary}[theorem]{Corollary}

\theoremstyle{remark}
\newtheorem{remark}{Remark}

\newcommand{\wire}[2]{\ensuremath{#1\,{\rule[2pt]{11pt}{.9pt}}\,#2}}

\predisplaypenalty=0

\numberwithin{equation}{section}

\makeatletter

\renewcommand*\subjclass[2][2010]{\def\@subjclass{#2}\@ifundefined{subjclassname@#1}{\ClassWarning{\@classname}{Unknown edition (#1) of Mathematics Subject Classification; using '2010'.}}{\@xp\let\@xp\subjclassname\csname subjclassname@#1\endcsname}}

\renewcommand{\subjclassname}{\textup{2010} Mathematics Subject Classification}

\makeatother

\begin{document}
\title[Identities in twisted Brauer monoids]{Identities in twisted Brauer monoids}

\author[N. V. Kitov]{Nikita V. Kitov}
\address{Institute of Natural Sciences and Mathematics\\
Ural Federal University\\ 620000 Ekaterinburg, Russia}
\email{n.v.kitov@urfu.ru}
\email{m.v.volkov@urfu.ru}

\author[M. V. Volkov]{Mikhail V. Volkov}

\thanks{The authors were supported by the Ministry of Science and Higher Education of the Russian Federation, project FEUZ-2023-2022.}

\begin{abstract}
We show that it is co-NP-hard to check whether a given semigroup identity holds in the twisted Brauer monoid $\mathcal{B}^\tau_n$ with $n\ge5$.
\end{abstract}

\keywords{Twisted Brauer monoid, Identity checking problem}

\subjclass{20M07, 68Q17}

\maketitle

\section{Introduction}
\label{sec:intro}

A \emph{semigroup word} is merely a finite sequence of symbols, called \emph{letters}. An \emph{identity} is a pair of semigroup words, traditionally written as a formal equality. We write identities using the sign $\bumpeq$, so that the pair $(w,w')$ is written as $w\bumpeq w'$, and reserve the usual equality sign $=$ for `genuine' equalities. For a semigroup word $w$, the set of all letters that occur in $w$ is denoted by $\alf(w)$. If $\mathcal{S}$ is a semigroup, any map $\varphi\colon\alf(w)\to\mathcal{S}$ is called a \emph{substitution}; the element of $\mathcal{S}$ that one gets by substituting $\varphi(x)$ for each letter $x\in\alf(w)$ and computing the product in $\mathcal{S}$ is denoted by $\varphi(w)$ and called the \emph{value} of $w$ under $\varphi$.

Let $w\bumpeq w'$ be an identity, and let $X=\alf(ww')$.  We say that a semigroup $\mathcal{S}$ \emph{satisfies} $w\bumpeq w'$ (or $w\bumpeq w'$ \emph{holds} in $\mS$) if $\varphi(w)=\varphi(w')$ for every substitution $\varphi\colon X\to\mathcal{S}$, that is, each substitution of~elements in $\mathcal{S}$ for letters in $X$ yields equal values to $w$ and $w'$.

Given a semigroup $\mathcal{S}$, its \emph{identity checking problem}, denoted \textsc{Check-Id}($\mathcal{S}$), is~a combinatorial decision problem whose instance is an identity $w\bumpeq w'$; the answer to the instance $w\bumpeq w'$ is ``YES'' if $\mathcal{S}$ satisfies $w\bumpeq w'$ and~``NO'' otherwise. An alternative name for this problem that sometimes appears in the literature is the `\emph{term equivalence problem}'.

The identity checking problem is obviously decidable for finite semigroups. An active research direction aims at classifying finite semigroups $\mathcal{S}$ according to the computational complexity of \textsc{Check-Id}($\mathcal{S}$); see \cite[Section 1]{KV20} for a brief overview and references. For an infinite semigroup, the identity checking problem can be undecidable; for an example of such a semigroup, see \cite{Mu68}. On the other hand, many infinite semigroups that naturally arise in mathematics such as \sgps\ of matrices over an infinite field, or \sgps\ of relations on an infinite domain, or \sgps\ of transformations of an infinite set satisfy only identities of the form $w\bumpeq w$, and hence, the identity checking problem for such `big' \sgps\ is trivially decidable in linear time. Another family of natural infinite semigroups with linear time identity checking comes from various additive and multiplicative structures in arithmetics and commutative algebra, typical representatives being the semigroups of positive integers under addition or multiplication. It is folklore that these commutative semigroups satisfy exactly so-called balanced identities. (An identity $w\bumpeq w'$ is \emph{balanced} if every letter occurs in $w$ and $w'$ the same number of times.  Clearly, the balancedness of $w\bumpeq w'$ can be verified in linear in $|ww'|$ time.)

The latter example shows in a nutshell a common approach to identity checking in semigroups. Given a semigroup $\mathcal{S}$, one looks for a~\emph{combinatorial characterization} of~the identities holding in $\mathcal{S}$ that could be effectively verified. Recently such characterizations have been found for some infinite semigroups of interest, including, e.g., the free 2-generated semiband $\mathcal{J}_\infty=\langle e,f \mid e^2=e,\ f^2=f\rangle$ \cite{ShV17}, the bicyclic monoid $\mathcal{B}=\langle p,q \mid qp=1\rangle$ \cite{DJK18}, the Kauffman monoids $\mathcal{K}_3$ and  $\mathcal{K}_4$ \cite{Chen20,KV20}, and several monoids originated in combinatorics of tableaux such as hypoplactic, stalactic, taiga, sylvester, and Baxter monoids \cite{CMR21,HZ21,CMR22,CJKM22}. Therefore the identity checking problem in each of these semigroups is solvable in polynomial time. On the other hand, no natural examples of infinite semigroups with decidable but computationally hard identity checking seem to have been published so far. The aim of the present paper is to exhibit a series of such examples. Namely, we show that for the twisted Brauer monoid $\mathcal{B}^\tau_n$ with $n\ge5$, the problem \textsc{Check-Id}($\mathcal{B}^\tau_n$) is co-NP-hard.

The paper is structured as follows. In Sect.~\ref{sec:brauer} we first recall the definition of the twisted Brauer monoids $\mathcal{B}^\tau_n$. Then we show that for each $n$, the monoid $\mathcal{B}^\tau_n$ embeds into a regular monoid that has much better structure properties albeit it satisfies exactly the same identities as $\mathcal{B}^\tau_n$. In Sect.~\ref{sec:almeida} we modify an approach devised in~\cite{AVG09} to deal with the identity checking problem for finite semigroups so that the modified version applies to infinite semigroups subject to some finiteness conditions. In Sect.~\ref{sec:main} we prove our main result (Theorem~\ref{thm:main}), and Section~\ref{sec:conclusion} collects some additional remarks and discusses future work.

We assume the reader's acquaintance with a few basic concepts of semigroup theory, including Green's relations and presentations of semigroups via generators and relations. The first chapters of Howie's classic textbook \cite{Howie:1995} contain everything we need. For computational complexity notions, we refer the reader to Papadimitriou's textbook \cite{Pa94}.

\section{Twisted Brauer monoids}
\label{sec:brauer}

\subsection{Definition}
\label{subsec:twisted}
Twisted Brauer monoids can be defined in various ways. Here we give their geometric definition, following~\cite{ACHLV15} (where the name `wire monoids' was used).

Let $[n]=\{1,\dots,n\}$ and let $[n]'=\{1',\dots,n'\}$ be a disjoint copy of $[n]$. Consider the set $\mathcal{B}^\tau_n$ of all pairs $(\pi;s)$ where $\pi$ is a partition of the $2n$-element set $[n]\cup [n]'$ into 2-element blocks and $s$ is a nonnegative integer. Such a pair is represented by a \emph{diagram} as shown in Fig.~\ref{fig:diagram} (borrowed from~\cite{ACHLV15}).
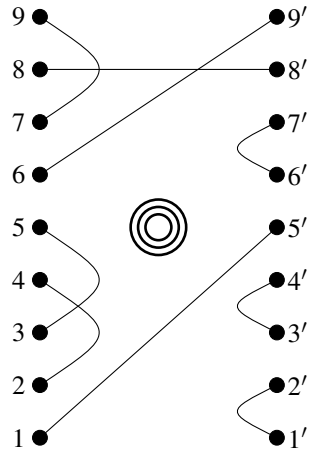
\begin{figure}[htb]
\centering
\begin{tikzpicture}
[scale=0.7]
\foreach \x in {0,4.5} \foreach \y in {0,1,2,3,4,5,6,7,8} \filldraw (\x,\y) circle (4pt);
\node[] at (-0.4,0) {$1$};
\node[] at (-0.4,1) {$2$};
\node[] at (-0.4,2) {$3$};
\node[] at (-0.4,3) {$4$};
\node[] at (-0.4,4) {$5$};
\node[] at (-0.4,5) {$6$};
\node[] at (-0.4,6) {$7$};
\node[] at (-0.4,7) {$8$};
\node[] at (-0.4,8) {$9$};
\node[] at (4.9,0) {$1'$};
\node[] at (4.9,1) {$2'$};
\node[] at (4.9,2) {$3'$};
\node[] at (4.9,3) {$4'$};
\node[] at (4.9,4) {$5'$};
\node[] at (4.9,5) {$6'$};
\node[] at (4.9,6) {$7'$};
\node[] at (4.9,7) {$8'$};
\node[] at (4.9,8) {$9'$};
    \draw (0,0) -- (4.5,4);
    \draw (0,5) -- (4.5,8);
    \draw (0,7) -- (4.5,7);
    \draw (0,1) .. controls (1.5,2) ..  (0,3);
    \draw (0,2) .. controls (1.5,3) ..  (0,4);
    \draw (0,6) .. controls (1.5,7) ..  (0,8);	
    \draw (4.5,0) .. controls (3.5,0.5) ..  (4.5,1);
    \draw (4.5,2) .. controls (3.5,2.5) ..  (4.5,3);
    \draw (4.5,5) .. controls (3.5,5.5) ..  (4.5,6);
\foreach \x in {2.25} \foreach \y in {4} \draw [line width=1pt] (\x,\y) circle (7pt);
\foreach \x in {2.25} \foreach \y in {4} \draw [line width=1pt] (\x,\y) circle (11pt);
\foreach \x in {2.25} \foreach \y in {4} \draw [line width=1pt] (\x,\y) circle (15pt);
\end{tikzpicture}
\caption{Diagram representing an element of $\mathcal{B}^\tau_9$}\label{fig:diagram}
\end{figure}
We represent the elements of $[n]$ by points on the left-hand side of the diagram (\emph{left points}) while the elements of $[n]'$ are represented by points on the right-hand side of the diagram (\emph{right points}). For $(\pi;s)\in \mathcal{B}^\tau_n$, we represent the number $s$ by $s$ closed curves (called \emph{circles} or \emph{floating components}) and each block of the partition $\pi$ is represented by a line referred to as a \emph{wire}. Thus, each wire connects two points; it is called an $\ell$-\emph{wire} if it connects two left points, an $r$-\emph{wire} if it connects two right points, and a $t$-\emph{wire} if it connects a left point with a right point. The  diagram in Fig.~\ref{fig:diagram} has three wires of each type and three circles; it corresponds to the pair
$$\Bigl(\bigr\{\{1,5'\},\{2,4\},\{3,5\},\{6,9'\},\{7,9\},\{8,8'\},\{1',2'\},\{3',4'\},\{6',7'\}\bigr\};\,3\Bigr).$$

In what follows we use `vertical' diagrams as the one in Fig.~\ref{fig:diagram} but in the literature (see, e.g., \cite{DE18}) the reader can also meet representations of pairs from $\mathcal{B}^\tau_n$ by `horizontal' diagrams like the one in Fig.~\ref{fig:horizontal diagram}.
\begin{figure}[ht]
\centering
\begin{tikzpicture}
\foreach \x in {1,2,...,8} \draw (\x,0) node[above=-1pt] {$\x$};
\foreach \x in {1,2,...,8} \draw (\x,-2) node[below=-1pt] {$\x'$};
\draw[thick] (1,0)--(1,-2); \draw[thick] (2,0)--(3,-2); \draw[thick](4,0)--(5,-2);\draw[thick](7,0)--(8,-2);
\draw[thick] (2,-2)..controls(3,-1.25)..(4,-2); \draw[thick] (3,0).. controls (4,-0.75)..(5,0);\draw[thick] (6,-2)..controls (6.5,-1.5)..(7,-2); \draw[thick](6,0).. controls(7,-0.75)..(8,0);
\draw[thick] (10,-1.4) circle(0.5cm);
\draw[thick] (11,-0.9) circle(1cm);
\draw[thick] (12,-0.9) circle(0.5cm);
\draw[thick] (9,-0.5) circle(0.5cm);
\end{tikzpicture}
\caption{Diagram representing an element of $\mathcal{B}^\tau_8$}\label{fig:horizontal diagram}
\end{figure}
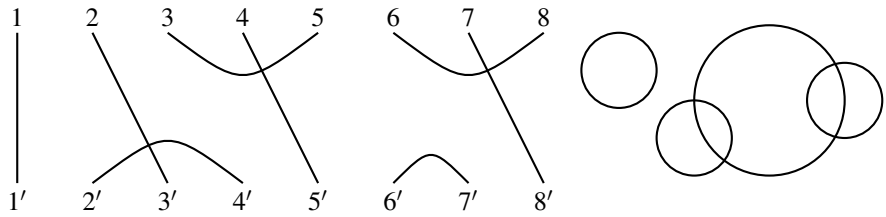
Of course, the `vertical' and `horizontal' viewpoints are fully equivalent. We also stress that only two things matter in any diagrammatic representation of the elements of $\mathcal{B}^\tau_n$, namely, 1) which points are connected and 2) the number of circles; neither the shape nor the relative position of the wires and circles matters. For instance, the diagram in Fig.~\ref{fig:another diagram} represents the same element of $\mathcal{B}^\tau_8$ as the diagram in Fig.~\ref{fig:horizontal diagram}.
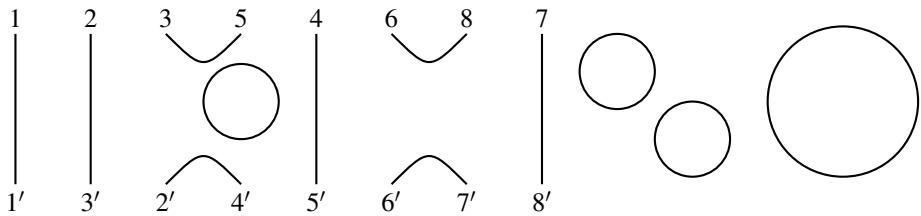
\begin{figure}[hb]
\centering
\begin{tikzpicture}
\draw (1,0) node[above=-1pt] {$1$};
\draw (2,0) node[above=-1pt] {$2$};
\draw (3,0) node[above=-1pt] {$3$};
\draw (4,0) node[above=-1pt] {$5$};
\draw (5,0) node[above=-1pt] {$4$};
\draw (6,0) node[above=-1pt] {$6$};
\draw (7,0) node[above=-1pt] {$8$};
\draw (8,0) node[above=-1pt] {$7$};
\draw (1,-2) node[below=-1pt] {$1'$};
\draw (2,-2) node[below=-1pt] {$3'$};
\draw (3,-2) node[below=-1pt] {$2'$};
\draw (4,-2) node[below=-1pt] {$4'$};
\draw (5,-2) node[below=-1pt] {$5'$};
\draw (6,-2) node[below=-1pt] {$6'$};
\draw (7,-2) node[below=-1pt] {$7'$};
\draw (8,-2) node[below=-1pt] {$8'$};
\draw[thick] (1,0)--(1,-2);
\draw[thick] (2,0)--(2,-2);
\draw[thick] (5,0)--(5,-2);
\draw[thick] (8,0)--(8,-2);
\draw[thick] (3,0)..controls(3.5,-0.5)..(4,0);
\draw[thick] (3,-2)..controls(3.5,-1.5)..(4,-2);
\draw[thick] (6,0)..controls(6.5,-0.5)..(7,0);
\draw[thick] (6,-2)..controls(6.5,-1.5)..(7,-2);
\draw[thick] (10,-1.4) circle(0.5cm);
\draw[thick] (12,-0.9) circle(1cm);
\draw[thick] (4,-0.9) circle(0.5cm);
\draw[thick] (9,-0.5) circle(0.5cm);
\end{tikzpicture}
\caption{Diagram of Fig.~\ref{fig:horizontal diagram} redrawn}\label{fig:another diagram}
\end{figure}

Now we define a multiplication in $\mathcal{B}^\tau_n$. Pictorially, in order to multiply two diagrams, we glue their wires together by identifying each right point $u'$ of the first diagram with the corresponding left point $u$ of the second diagram. This way we obtain a new diagram whose left (respectively, right) points are the left (respectively, right) points of the first (respectively, second) diagram. Two points of this new diagram are connected in it if one can reach one of them from the other by walking along a sequence of consecutive wires of the factors, see Fig.~\ref{fig:multiplication} (where the labels $1,2,\dots,9,1',2',\dots,9'$ are omitted but they are assumed to go up in the consecutive order). All circles of the factors are inherited by the product; in addition, some extra circles may arise from $r$-wires of the first diagram combined with $\ell$-wires of the second diagram.

In more precise terms, if $\xi=(\pi_1;s_1)$, $\eta=(\pi_2;s_2)$, then a left point $p$ and a right point $q'$ of the product $\xi\eta$ are connected by a $t$-wire if and only if one of the following holds:
\begin{trivlist}
\item[$\bullet$] \wire{p}{u'} is a $t$-wire in $\xi$ and \wire{u}{q'} is a $t$-wire in $\eta$ for some $u\in[n]$;
\item[$\bullet$] for some $s>1$ and some $u_1,v_1,u_2,\dots,v_{s-1},u_s\in[n]$ (all pairwise distinct), \wire{p}{u_1'} is a $t$-wire in $\xi$ and \wire{u_s}{q'} is a $t$-wire in $\eta$, while \wire{u_i}{v_i} is an $\ell$-wire in $\eta$ and \wire{v_i'}{u_{i+1}'} is an $r$-wire in $\xi$ for each $i=1,\dots,s-1$.\\
(The reader may trace an application of the second rule in Fig.~\ref{fig:multiplication}, in which such a `composite' $t$-wire connects 1 and $3'$ in the product diagram.)
\end{trivlist}
\begin{figure}[ht]
\centering
\begin{tikzpicture}
[scale=0.55]
\foreach \x in {0,4.5} \foreach \y in {0,1,2,3,4,5,6,7,8} \filldraw (\x,\y) circle (4pt);
    \draw (0,0) -- (4.5,4);
    \draw (0,5) -- (4.5,8);
    \draw (0,7) -- (4.5,7);
    \draw (0,1) .. controls (1.5,2) ..  (0,3);
    \draw (0,2) .. controls (1.5,3) ..  (0,4);
    \draw (0,6) .. controls (1.5,7) ..  (0,8);	
    \draw (4.5,0) .. controls (3.5,0.5) ..  (4.5,1);
    \draw (4.5,2) .. controls (3.5,2.5) ..  (4.5,3);
    \draw (4.5,5) .. controls (3.5,5.5) ..  (4.5,6);
\foreach \x in {2.25} \foreach \y in {4} \draw [line width=1pt] (\x,\y) circle (7pt);
\foreach \x in {2.25} \foreach \y in {4} \draw [line width=1pt] (\x,\y) circle (11pt);
\foreach \x in {2.25} \foreach \y in {4} \draw [line width=1pt] (\x,\y) circle (15pt);
\node[] at (5.5,4) {$\times$};
\foreach \x in {6.5,11} \foreach \y in {0,1,2,3,4,5,6,7,8} \filldraw (\x,\y) circle (4pt);
    \draw[red] (6.5,6) -- (11,2);
    \draw[red]  (6.5,0) .. controls (7.5,0.5) ..  (6.5,1);
    \draw[red]  (6.5,2) .. controls (8.5,3.5) ..  (6.5,5);
    \draw[red]  (6.5,3) .. controls (7.5,3.5) ..  (6.5,4);
    \draw[red]  (6.5,7) .. controls (7.5,7.5) ..  (6.5,8);	
    \draw[red]  (11,0) .. controls (10,0.5) ..  (11,1);
    \draw[red]  (11,3) .. controls (9,5.5) ..  (11,8);
    \draw[red]  (11,4) .. controls (10,4.5) ..  (11,5);
    \draw[red]  (11,6) .. controls (10,6.5) ..  (11,7);
\foreach \x in {8.75} \foreach \y in {2} \draw [red, line width=1pt] (\x,\y) circle (7pt);
\node[] at (12,4) {$=$};
\foreach \x in {13,17.5} \foreach \y in {0,1,2,3,4,5,6,7,8} \filldraw (\x,\y) circle (4pt);
    \draw (13,0) -- (17.5,4);
    \draw (13,5) -- (17.5,8);
    \draw (13,7) -- (17.5,7);
    \draw (13,1) .. controls (14.5,2) ..  (13,3);
    \draw (13,2) .. controls (14.5,3) ..  (13,4);
    \draw (13,6) .. controls (14.5,7) ..  (13,8);	
    \draw (17.5,0) .. controls (16.5,0.5) ..  (17.5,1);
    \draw (17.5,2) .. controls (16.5,2.5) ..  (17.5,3);
    \draw (17.5,5) .. controls (16.5,5.5) ..  (17.5,6);
\foreach \x in {15.25} \foreach \y in {4} \draw [line width=1pt] (\x,\y) circle (7pt);
\foreach \x in {15.25} \foreach \y in {4} \draw [line width=1pt] (\x,\y) circle (11pt);
\foreach \x in {15.25} \foreach \y in {4} \draw [line width=1pt] (\x,\y) circle (15pt);
\draw[dashed] (17.5,-0.5) -- (17.5,8.5);
\foreach \x in {17.5,22} \foreach \y in {0,1,2,3,4,5,6,7,8} \filldraw (\x,\y) circle (4pt);
    \draw[red] (17.5,6) -- (22,2);
    \draw[red]  (17.5,0) .. controls (18.5,0.5) ..  (17.5,1);
    \draw[red]  (17.5,2) .. controls (19.5,3.5) ..  (17.5,5);
    \draw[red]  (17.5,3) .. controls (18.5,3.5) ..  (17.5,4);
    \draw[red]  (17.5,7) .. controls (18.5,7.5) ..  (17.5,8);	
    \draw[red]  (22,0) .. controls (21,0.5) ..  (22,1);
    \draw[red]  (22,3) .. controls (20,5.5) ..  (22,8);
    \draw[red]  (22,4) .. controls (21,4.5) ..  (22,5);
    \draw[red]  (22,6) .. controls (21,6.5) ..  (22,7);
\foreach \x in {19.75} \foreach \y in {2} \draw [red, line width=1pt] (\x,\y) circle (7pt);
\end{tikzpicture}
\caption{Multiplication of diagrams}\label{fig:multiplication}
\end{figure}
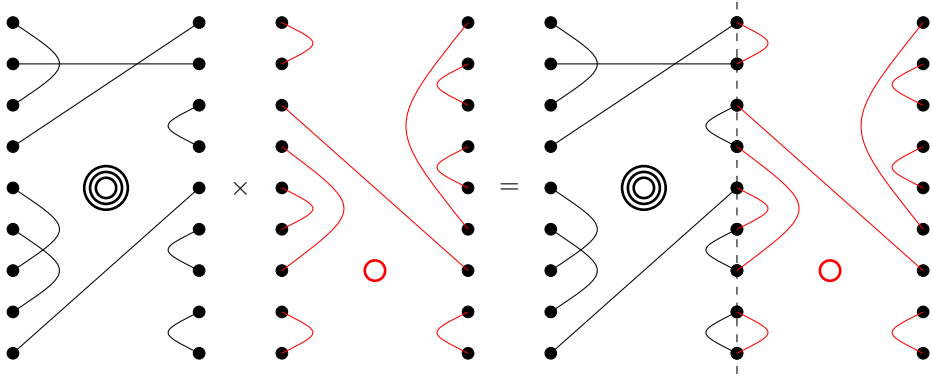

Analogous characterizations hold for the $\ell$-wires and $r$-wires of $\xi\eta$. Here we include only the rules for forming the $\ell$-wires as the $r$-wires of the product are obtained in a perfectly symmetric way.

Two left points $p$ and $q$ of $\xi\eta$ are connected by an $\ell$-wire if and only if one of the following holds:
\begin{trivlist}
\item[$\bullet$] \wire{p}{q} is an $\ell$-wire in $\xi$;
\item[$\bullet$] for some $s\ge1$ and some $u_1,v_1,u_2,\dots,v_s\in[n]$ (all pairwise distinct), \wire{p}{u_1'} and \wire{q}{v_s'} are $t$-wires in $\xi$, while \wire{u_i}{v_i} is an $\ell$-wire in $\eta$ for each $i=1,\dots,s$ and if $s>1$, then \wire{v_i'}{u_{i+1}'} is an $r$-wire in $\xi$ for each $i=1,\dots,s-1$.\\
(Again, Fig.~\ref{fig:multiplication} provides an instance of the second rule: look at the $\ell$-wire that connects 6 and 8 in the product diagram.)
\end{trivlist}

Finally, each circle of the product $\xi\eta$ corresponds to either a circle in $\xi$ or $\eta$ or a sequence $u_1,v_1,\dots,u_s,v_s\in[n]$ with $s\ge 1$ and pairwise distinct $u_1,v_1,\dots,u_s,v_s$ such that all \wire{u_i}{v_i} are $\ell$-wires in $\eta$, while all \wire{v_i'}{u_{i+1}'} and \wire{v_s'}{u_1'} are $r$-wires in $\xi$.\\ (In Fig.~\ref{fig:multiplication}, one sees such a `new' circle formed by the $\ell$-wire \wire{1}{2} of the second factor glued to the $r$-wire \wire{2'}{1'} of the first factor.)

The above defined multiplication in $\mathcal{B}^\tau_n$ is easily seen to be associative and the diagram with 0 circles and the $n$ horizontal $t$-wires \wire{1}{1'}, \dots, \wire{n}{n'} is the identity element with respect to the multiplication. Thus, $\mathcal{B}^\tau_n$ is a monoid called the \emph{twisted Brauer monoid}.

\subsection{Background}
\label{subsec:background}
We refer to \cite{DE18} for a throughout analysis of the semigroup-theoretic properties of twisted Brauer monoids. Here we explain the terminology and relate the monoids $\mathcal{B}^\tau_n$ to the representation theory of classical groups. Some parts of our discussion involve concepts from beyond semigroup theory. These parts are not used in subsequent proofs so that the reader who is only interested in our main result can safely skip them.

Let $R$ be a commutative ring with 1 and $\mathcal{S}$ a semigroup. Following \cite{Wil07}, we say that a map $\tau\colon\mathcal{S}\times\mathcal{S}\to R$ is a \emph{twisting} from $\mathcal{S}$ to $R$ if
\begin{equation}
\label{eq:twisting}
\tau(s, t) \tau(s t, u)=\tau(s, t u) \tau(t, u) \text { for all } s, t, u \in \mathcal{S}.
\end{equation}
The \emph{twisted semigroup algebra} of $\mathcal{S}$ over $R$, with twisting $\tau$, denoted by $R^\tau[\mathcal{S}]$, is the free $R$-module spanned by $\mathcal{S}$ as a basis with multiplication $\circ$ defined by
\begin{equation}
\label{eq:basis}
s \circ t=\tau(s, t)st \quad \text { for all } s, t \in \mathcal{S},
\end{equation}
and extended by linearity. Condition \eqref{eq:twisting} readily implies that this multiplication is associative. If $\tau(s, t)=1$ for all $s, t \in \mathcal{S}$, then the twisted semigroup algebra $R^\tau[\mathcal{S}]$ is nothing but the usual semigroup algebra $R[\mathcal{S}]$. Thus, twisted semigroup algebras provide a vast generalization of semigroup algebras while retaining many useful properties of the latter, in particular, those important for representation theory.

Having clarified the meaning of `twisted', let us explain what the Brauer monoid is. Denote by $\mathcal{B}_n$ the set of all partitions of the $2n$-element set $[n]\cup [n]'$ into 2-element blocks; we visualize them as diagrams similar to the one in Fig.~\ref{fig:diagram} but without floating components. For instance, Fig.~\ref{fig:B3} shows the 15 diagrams representing the partitions from $\mathcal{B}_3$.

\medskip

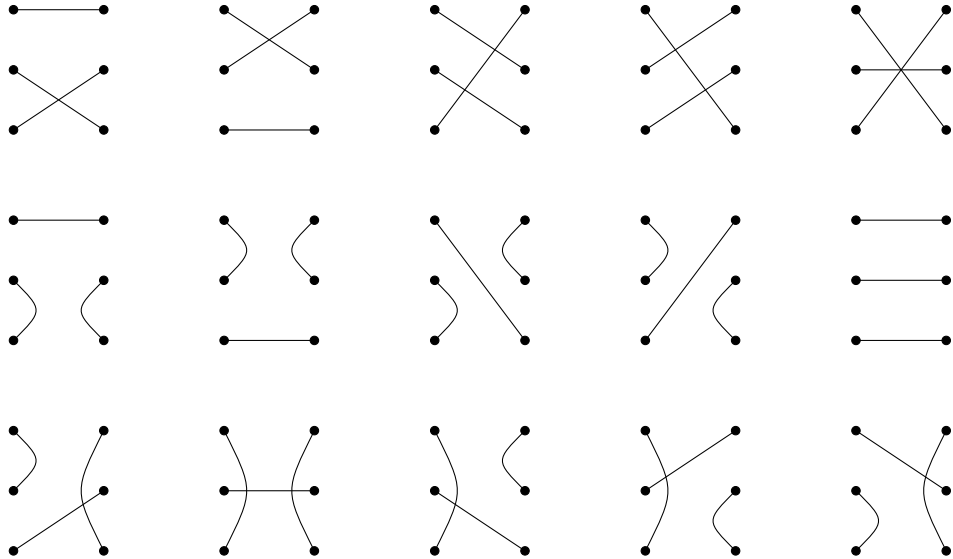
\begin{figure}[htb]
\centering
\begin{tikzpicture}
[scale=0.8]
\foreach \x in {0,1.5,3.5,5,7,8.5,10.5,12,14,15.5} \foreach \y in {0.5,1.5,2.5,-1,-2,-3,-4.5,-5.5,-6.5} \filldraw (\x,\y) circle (2pt);
	\draw (0,0.5) -- (1.5,1.5);
	\draw (0,1.5) -- (1.5,0.5);
	\draw (0,2.5) -- (1.5,2.5);
	\draw (3.5,0.5) -- (5,0.5);
	\draw (3.5,1.5) -- (5,2.5);
	\draw (3.5,2.5) -- (5,1.5);
	\draw (7,0.5) -- (8.5,2.5);
	\draw (7,1.5) -- (8.5,0.5);
	\draw (7,2.5) -- (8.5,1.5);
	\draw (10.5,0.5) -- (12,1.5);
	\draw (10.5,1.5) -- (12,2.5);
	\draw (10.5,2.5) -- (12,0.5);
	\draw (14,0.5) -- (15.5,2.5);
	\draw (14,1.5) -- (15.5,1.5);
	\draw (14,2.5) -- (15.5,0.5);
	\draw (0,-1) -- (1.5,-1);
	\draw (0,-2) .. controls (.5,-2.5) ..  (0,-3);
	\draw (1.5,-2) .. controls (1,-2.5) ..  (1.5,-3);
	\draw (3.5,-3) -- (5,-3);
	\draw (3.5,-2) .. controls (4,-1.5) ..  (3.5,-1);
	\draw (5,-2) .. controls (4.5,-1.5) ..  (5,-1);
	\draw (7,-1) -- (8.5,-3);
	\draw (7,-2) .. controls (7.5,-2.5) ..  (7,-3);
	\draw (8.5,-1) .. controls (8,-1.5) ..  (8.5,-2);
	\draw (10.5,-3) -- (12,-1);
	\draw (10.5,-2) .. controls (11,-1.5) ..  (10.5,-1);
	\draw (12,-2) .. controls (11.5,-2.5) ..  (12,-3);
    \draw (14,-1) -- (15.5,-1);
	\draw (14,-2) -- (15.5,-2);
	\draw (14,-3) -- (15.5,-3);	
	\draw (0,-6.5) -- (1.5,-5.5);
	\draw (0,-5.5) .. controls (0.5,-5) ..  (0,-4.5);
	\draw (1.5,-6.5) .. controls (1,-5.5) ..  (1.5,-4.5);
    \draw (5,-5.5) -- (3.5,-5.5);
	\draw (5,-4.5) .. controls (4.5,-5.5) ..  (5,-6.5);
	\draw (3.5,-4.5) .. controls (4,-5.5) ..  (3.5,-6.5);
	\draw (7,-5.5) -- (8.5,-6.5);
	\draw (7,-6.5) .. controls (7.5,-5.5) ..  (7,-4.5);
	\draw (8.5,-5.5) .. controls (8,-5) ..  (8.5,-4.5);
	\draw (10.5,-5.5) -- (12,-4.5);
	\draw (10.5,-6.5) .. controls (11,-5.5) ..  (10.5,-4.5);
	\draw (12,-5.5) .. controls (11.5,-6) ..  (12,-6.5);
    \draw (14,-4.5) -- (15.5,-5.5);
	\draw (14,-5.5) .. controls (14.5,-6) ..  (14,-6.5);
	\draw (15.5,-4.5) .. controls (15,-5.5) ..  (15.5,-6.5);
\end{tikzpicture}
\caption{Diagrams representing the elements of $\mathcal{B}_3$}\label{fig:B3}
\end{figure}

Identifying each partition $\pi\in\mathcal{B}_n$ with the pair $(\pi;0)\in\mathcal{B}^\tau_n$, we may treat $\mathcal{B}_n$ as a subset of $\mathcal{B}^\tau_n$. The multiplication in $\mathcal{B}^\tau_n$ induces a multiplication in $\mathcal{B}_n$ as follows: given two partitions $\pi_1,\pi_2\in\mathcal{B}_n$, one computes the product of the pairs $(\pi_1;0)$ and $(\pi_2;0)$ in $\mathcal{B}^\tau_n$ and if $(\pi_1;0)(\pi_2;0)=(\pi;s)$, one lets $\pi$ be the product of $\pi_1$ and $\pi_2$ in $\mathcal{B}_n$. In other words, one multiplies the diagrams of $\pi_1$ and $\pi_2$, using the multiplication rules in Section~\ref{subsec:twisted}, and then discards all `new' circles if they arise.

Under the above defined multiplication, $\mathcal{B}_n$ constitutes a monoid called the \emph{Brauer monoid}. This family of monoids was invented by Brauer~\cite{Br37} back in 1937, hence the name.
Brauer used the monoid $\mathcal{B}_n$ to study the linear representations of orthogonal and symplectic groups.

Observe that $\mathcal{B}_n$ is \textbf{not} a submonoid of $\mathcal{B}^\tau_n$; at the same time, $\mathcal{B}_n$ is easily seen to be the homomorphic image of $\mathcal{B}^\tau_n$ under the `forgetting' homomorphism $(\pi;s)\mapsto\pi$ that removes the circles from the diagrams in $\mathcal{B}^\tau_n$.

Given two partitions $\pi_1,\pi_2\in\mathcal{B}_n$, we denote by $\langle\pi_1,\pi_2\rangle$ the number of cycles that arise when the pairs $(\pi_1;0)$ and $(\pi_2;0)$ are multiplied in $\mathcal{B}^\tau_n$. Using this notation, we have the following useful formula expressing the multiplication in $\mathcal{B}^\tau_n$ via that in $\mathcal{B}_n$:
\begin{equation}\label{eq:mult}
(\pi_1;s_1)(\pi_2;s_2)=(\pi_1\pi_2;s_1+s_2+\langle\pi_1,\pi_2\rangle),
\end{equation}
where the product $\pi_1\pi_2$ in the right-hand side is computed in $\mathcal{B}_n$.

Now let $F$ be a field of characteristic 0. Fix an element $\theta\in F\setminus\{0\}$ and consider the map $\tau\colon\mathcal{B}_n\times\mathcal{B}_n\to F$ defined by
\[
\tau(\pi_1,\pi_2)=\theta^{\langle\pi_1,\pi_2\rangle}.
\]
It is known (and easy to verify) that $\tau$ satisfies \eqref{eq:twisting} so the map is a twisting from $\mathcal{B}_n$ to $F$. Hence, one can construct the twisted semigroup algebra $F^\tau[\mathcal{B}_n]$. In the literature, the notation and the name for this algebra  vary; we denote it by $B_n(\theta)$ as in \cite{Kerov:87} and call it \emph{Brauer's centralizer algebra} as, e.g., in~\cite{HW89}. In~\cite{Br37}, Brauer's centralizer algebra $B_n(m)$, where $m$ is a positive integer, was used to study the natural representation of the orthogonal group $\mathrm{O}_m$  on the $n$-th tensor power $(F^m)^{\otimes n}$ of the space $F^m$. (The algebra $B_n(m)$ is exactly the centralizer of the diagonal action of $\mathrm{O}_m$ on $(F^m)^{\otimes n}$, hence the name.) When $m$ is an even positive integer, Brauer's centralizer algebra $B_n(-m)$ allowed for a similar study of the representation of the symplectic group $\mathrm{Sp}_m$ on $(F^m)^{\otimes n}$. It was also present in~\cite{Br37}, albeit implicitly; see~\cite{HW89} for a detailed analysis. For the case where the parameter $\theta$ is arbitrary, the ring-theoretic structure of the algebra $B_n(\theta)$ has been determined by Wenzl~\cite{Wen88} (in particular, he has proved that if $\theta$ is not an integer, then the algebra $B_n(\theta)$ is semisimple).

In a simplified form, the approach used in~\cite{Wen88} to give a uniform treatment of the algebras $B_n(\theta)$ for various $\theta$ can be stated as follows. Define a twisting from $\mathcal{B}_n$ to the polynomial ring $F[X]$ by\footnote{Wenzl~\cite{Wen88} employed the same twisting but to the field of rational functions rather than the polynomial ring.}
\begin{equation}\label{eq:xtwist}
\tau(\pi_1,\pi_2)=X^{\langle\pi_1,\pi_2\rangle}.
\end{equation}
Then one gets the twisted semigroup algebra $(F[X])^\tau[\mathcal{B}_n]$ that can be denoted by $B_n(X)$. For each $\theta\in F\setminus\{0\}$, evaluating $X$ at $\theta$ gives rise to a homomorphism from $B_n(X)$ onto $B_n(\theta)$ so that the algebra $B_n(X)$ is a kind of mother of all Brauer's centralizer algebras.

On the other hand, $B_n(X)$ is nothing but the usual (non-twisted) semigroup algebra $F[\mathcal{B}^\tau_n]$. Indeed, it is easy to show that the bijection $\beta\colon(\pi;s)\mapsto X^s\pi$ is a semigroup isomorphism between the $F$-basis $\mathcal{B}^\tau_n$ of $F[\mathcal{B}^\tau_n]$ and the $F$-basis $\{X^s\pi\mid s\in\mathbb{Z}_{\ge0},\ \pi\in\mathcal{B}_n\}$ of $B_n(X)$ where the latter basis is equipped with multiplication $\circ$ as in  \eqref{eq:basis}:
\begin{align*}
 \beta\bigl((\pi_1;s_1)(\pi_2;s_2)\bigr) &=\beta\bigl((\pi_1\pi_2;s_1+s_2+\langle\pi_1,\pi_2\rangle)\bigr) &&\text{by \eqref{eq:mult}}  \\
                                         &=X^{s_1+s_2+\langle\pi_1,\pi_2\rangle}\pi_1\pi_2 &&\text{by the definition of $\beta$} \\
                                         &=X^{s_1}\pi_1\circ X^{s_2}\pi_2 && \text{by the definition of $\circ$; see \eqref{eq:basis}}\\
                                         &=\beta(\pi_1;s_1)\circ\beta(\pi_2;s_2)&&\text{by the definition of $\beta$.}
\end{align*}
The isomorphism $(F[X])^\tau[\mathcal{B}_n]\cong F[\mathcal{B}^\tau_n]$, which moves the twist from the outer ring to the inner semigroup, stands behind our terminology: instead of twisting the semigroup algebra of the Brauer monoid, we twist the monoid itself, thus getting the twisted Brauer monoid.

\subsection{Presentation for $\mathcal{B}^\tau_n$}
\label{subsec:presentation}
It is known that the twisted Brauer monoid $\mathcal{B}^\tau_n$ can be generated by the following $2n-1$ pairs:
\begin{itemize}
  \item \emph{transpositions} $t_i=\Bigl(\bigr\{ \wire{i}{(i+1)'},\ \wire{i'}{i+1},\  \wire{j}{j'} \mid \text{for } j\ne i,i+1\bigr\};\,0\Bigr)$,\\ $i=1,\dots,n-1$,
  \item \emph{hooks} $h_i=\Bigl(\bigr\{\wire{i}{i+1},\ \wire{i'}{(i+1)'},\  \wire{j}{j'} \mid \text{for } j\ne i,i+1\bigr\};\,0\Bigr)$,\\ $i=1,\dots,n-1$,
  \item and the \emph{circle} $c=\Bigl(\bigr\{\wire{j}{j'} \mid \text{for } j=1,\dots,n\bigr\};\,1\Bigr).$
\end{itemize}
For an illustration, see Fig.~\ref{fig:B3}: the first two diagrams in the top row represent the transpositions $t_1$ and $t_2$, and the first two diagrams in the middle row represent the hooks $h_1$ and $h_2$. (The omitted labels $1,2,3,1',2',3'$ are assumed to go up in the consecutive order.)

For all $i,j=1,\dots,n-1$, the generators $t_1,\dots,t_{n-1},h_1,\dots,h_{n-1},c$ satisfy the following:
\begin{align}
&t_{i}^{2}=1,                    &&\label{eq:M1}\\
&t_{i}t_{j}=t_{j}t_{i}           &&\text{if } |i-j|\ge 2,\label{eq:M2}\\
&t_{i}t_{j}t_{i}=t_{j}t_{i}t_{j} &&\text{if } |i-j|=1,\label{eq:M3}\\
&ct_{i}=t_{i}c,                  &&\label{eq:C}\\
&h_{i}h_{j}=h_{j}h_{i}           &&\text{if } |i-j|\ge 2,\label{eq:TL1}\\
&h_{i}h_{j}h_{i}=h_{i}           &&\text{if } |i-j|=1,\label{eq:TL2}\\
&h_{i}^2=ch_{i}=h_{i}c,          &&\label{eq:TL3}\\
&h_{i}t_{i}=t_{i}h_{i}=h_{i},    &&\label{eq:Mix1}\\
&h_{i}t_{j}=t_{j}h_{i}           &&\text{if } |i-j|\ge2,\label{eq:Mix2}\\
&t_{i}h_{j}h_{i}=t_{j}h_{i},\ h_{i}h_{j}t_{i}=h_{i}t_{j} &&\text{if } |i-j|=1.\label{eq:Mix3}
\end{align}

\begin{proposition}
\label{prop:presentation}
The relations \eqref{eq:M1}--\eqref{eq:Mix3} is a monoid presentation of the twisted Brauer monoid $\mathcal{B}^\tau_n$ with respect to the generators $t_1,\dots,t_{n-1},h_1,\dots,h_{n-1},c$.
\end{proposition}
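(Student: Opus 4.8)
The plan is to show that the natural surjection from the presented monoid onto $\mathcal{B}^\tau_n$ is an isomorphism by constructing an explicit inverse. Write $\mathcal{M}$ for the monoid defined by the presentation \eqref{eq:M1}--\eqref{eq:Mix3}. Since the generators of $\mathcal{B}^\tau_n$ satisfy all these relations, there is a surjective homomorphism $\phi\colon\mathcal{M}\to\mathcal{B}^\tau_n$ sending each generator symbol to the eponymous element; the whole content of the proposition is that $\phi$ is injective. The first observation I would record is that, by \eqref{eq:C} and \eqref{eq:TL3}, the generator $c$ is central in $\mathcal{M}$, and that $c$ has infinite order there because its image $\phi(c)=(\mathrm{id};1)$ does.

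Next I would pass to the quotient of $\mathcal{M}$ obtained by imposing $c=1$. Under this specialization the relations \eqref{eq:M1}--\eqref{eq:Mix3} turn into the defining relations of the plain Brauer monoid $\mathcal{B}_n$ (in particular \eqref{eq:TL3} becomes $h_i^2=h_i$), so the quotient is exactly $\mathcal{B}_n$ with its classical presentation (see, e.g., \cite{DE18}); write $\rho\colon\mathcal{M}\to\mathcal{B}_n$ for the resulting retraction, which equals the composite of $\phi$ with the circle-forgetting map. Using the known normal form for Brauer diagrams I would fix a transversal $U=\{u_\pi\mid\pi\in\mathcal{B}_n\}$ of reduced words in $\{t_i,h_i\}$, normalized so that $u_{\mathrm{id}}=1$ and $u_{\tau_i}=t_i$, $u_{\eta_i}=h_i$ for the transposition and hook partitions $\tau_i=\phi(t_i)$, $\eta_i=\phi(h_i)$, and --- crucially --- so that each $u_\pi$ creates no circles when evaluated in $\mathcal{B}^\tau_n$, i.e. $\phi(u_\pi)=(\pi;0)$. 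Such circle-free reduced words exist because the canonical Brauer normal form is built without ever squaring a hook.

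The heart of the argument is a Normal Form Lemma: every element of $\mathcal{M}$ can be written as $c^s u_\pi$ for a unique pair $(s,\pi)$ with $s\ge0$ and $\pi\in\mathcal{B}_n$. Existence I would obtain by pushing all occurrences of $c$ to the front (centrality) and then reducing the remaining $\{t_i,h_i\}$-word to its Brauer normal form; each reduction step either leaves the $c$-count unchanged or, when it collapses a squared hook via \eqref{eq:TL3}, emits one further $c$, so the process terminates in some $c^s u_\pi$ with $s\ge0$. Uniqueness is then free: applying $\phi$ and using \eqref{eq:mult} together with $\phi(u_\pi)=(\pi;0)$ gives $\phi(c^s u_\pi)=(\pi;s)$, so distinct normal forms have distinct images. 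Granting the lemma, the bijection $c^s u_\pi\mapsto(\pi;s)$ realized by $\phi$ between $\mathcal{M}$ and $\mathcal{B}^\tau_n$ shows that $\phi$ is injective, and the proposition follows.

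The step I expect to be the main obstacle is the existence half of the Normal Form Lemma --- equivalently, the family of identities
\[
u_{\pi_1}u_{\pi_2}=c^{\langle\pi_1,\pi_2\rangle}u_{\pi_1\pi_2}\qquad(\pi_1,\pi_2\in\mathcal{B}_n)
\]
in $\mathcal{M}$. The subtlety is that equality in the quotient $\mathcal{B}_n$ only tells us that $u_{\pi_1}u_{\pi_2}$ and $u_{\pi_1\pi_2}$ agree modulo $c$, i.e. that $c^a u_{\pi_1}u_{\pi_2}=c^b u_{\pi_1\pi_2}$ for some $a,b\ge0$; one must then control the sign and exact value of the $c$-exponent. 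Applying $\phi$ and comparing circle counts via \eqref{eq:mult} forces $b-a=\langle\pi_1,\pi_2\rangle\ge0$, and to cancel the spurious factor $c^a$ one needs either left-cancellativity of $c$ in $\mathcal{M}$ or, more robustly, a genuinely circle-monotone rewriting system in which the rule $h_i^2\to c h_i$ is only ever applied from left to right. Verifying that such a reduction is confluent --- so that it always reaches the chosen transversal word $u_\pi$ --- is the delicate bookkeeping at the core of the proof; everything else is formal.
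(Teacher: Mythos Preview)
Your approach is genuinely different from the paper's. The paper does not give a direct argument at all: it simply records that the result is ``an immediate combination of two known ingredients'', namely the Kudryavtseva--Mazorchuk presentation for the Brauer monoid $\mathcal{B}_n$ \cite[Section~3]{KM06} and East's general machinery for `twisting' a presentation of a semigroup into a presentation of the associated twisted semigroup algebra \cite[Section~6 and Remark~45]{Ea11}. That machinery is designed precisely to keep track of the power of the central parameter produced when relations are applied, so it absorbs exactly the bookkeeping you isolate as the obstacle.

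Your outline is sound up to the point you yourself flag. Centrality of $c$ is fine, the quotient by $c=1$ is indeed $\mathcal{B}_n$ by \cite{KM06}, and uniqueness of the normal form via $\phi$ is immediate. The real content is the existence step, i.e.\ the identities $u_{\pi_1}u_{\pi_2}=c^{\langle\pi_1,\pi_2\rangle}u_{\pi_1\pi_2}$ in $\mathcal{M}$. Your diagnosis is accurate: from $\rho$ you only get $c^{a}u_{\pi_1}u_{\pi_2}=c^{b}u_{\pi_1\pi_2}$ with $a,b\ge0$, and neither cancellativity of $c$ in $\mathcal{M}$ nor confluence of a circle-monotone rewriting system is available for free. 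Cancellativity of $c$ cannot be read off $\phi$ without already knowing $\phi$ is injective, so that route is circular; and orienting $h_i^2\to ch_i$ left-to-right does give a terminating system, but confluence requires checking all overlaps with the remaining relations (including the mixed ones \eqref{eq:Mix1}--\eqref{eq:Mix3}), which is exactly the ``delicate bookkeeping'' you defer. So as written the proposal is an outline with a genuine, correctly identified gap rather than a complete proof.

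In short: the paper buys brevity by outsourcing the hard step to \cite{Ea11}; your approach is self-contained and conceptually clear, but to finish it you would have to either carry out the confluence analysis in full or reproduce East's twisting-of-presentations argument in this special case, which amounts to the same work.
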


Even though Proposition~\ref{prop:presentation} does not seem to have been registered in the literature, its result is not essentially new as it is an immediate combination of two known ingredients: 1) a presentation for the
Brauer monoid $\mathcal{B}_n$~\cite[Section 3]{KM06}, and 2) a method for `twisting' presentations, that is, obtaining a presentation for a twisted semigroup algebra from a given presentation of the underlying semigroup of the algebra~\cite[Section 6 and Remark 45]{Ea11}\footnote{The authors are grateful to Dr. James East who drew their attention to this combination.}.

The presentation of Proposition~\ref{prop:presentation} is not the most economical one in terms of the number of generators (in fact, for each $n$, the monoid $\mathcal{B}^\tau_n$ can be generated by just four elements, see \cite[Proposition 3.11]{DE18}). It is, however, quite transparent and conveniently reveals some structural components of $\mathcal{B}^\tau_n$.

For instance, the relations \eqref{eq:M1}--\eqref{eq:M3} are nothing but Moore's classical relations \cite[Theorem A]{Moore97} for the symmetric group $\mathbb{S}_n$. Since both sides of any other relation involve some generator beside the transpositions $t_1,\dots,t_{n-1}$, no other relation can be applied to a word composed of transpositions only. Therefore, the transpositions generate in $\mathcal{B}^\tau_n$ a subgroup isomorphic to $\mathbb{S}_n$; this subgroup is actually the group of units of $\mathcal{B}^\tau_n$. The reader may use Fig.~\ref{fig:B3} as an illustration for $n=3$: the five diagrams on the top row, together with the last diagram of the middle row, represent the elements of $\mathbb{S}_3$.

On the other hand, the relations \eqref{eq:TL1}--\eqref{eq:TL3} that do not involve $t_1,\dots,t_{n-1}$ are the so-called Temperley--Lieb relations constituting a presentation for the \emph{Kauffman monoid} $\mathcal{K}_n$.
(We mentioned the monoids $\mathcal{K}_3$ and  $\mathcal{K}_4$ from this family in the introduction.) Again, Fig.~\ref{fig:B3} provides an illustration for $n=3$: the five diagrams of the middle row represent the partitions $\pi$ such that $(\pi;s)\in \mathcal{K}_3$ for any $s\in\mathbb{Z}_{\ge 0}$. One sees that these five diagrams are exactly those whose wires do not cross. It is this property that Kauffman \cite{Ka90} used to introduce the monoids $\mathcal{K}_n$; namely, he defined $\mathcal{K}_n$ as the submonoid of $\mathcal{B}^\tau_n$ consisting of all elements of $\mathcal{B}^\tau_n$ that have a representation as a diagram in which the labels $1,2,\dots,n,1',2',\dots,n'$ go up in the consecutive order and wires do not cross. The fact that the submonoid can be identified with the monoid generated by $h_1,\dots,h_{n-1},c$ subject to the relations  \eqref{eq:TL1}--\eqref{eq:TL3} was stated in~\cite{Ka90} with a proof sketch; for a detailed proof, see \cite{BDP02} or~\cite{Ea21}.

\subsection{The monoid $\mathcal{B}^{\pm\tau}_n$ and its identities}
\label{subsec:embedding}

Following an idea by Karl Auinger (personal communication), we embed the monoid $\mathcal{B}^\tau_n$ into a larger monoid $\mathcal{B}^{\pm\tau}_n$ that shares the identities with $\mathcal{B}^\tau_n$ but has much better structure properties. In~\cite{KV20}, we applied the same trick to Kauffman monoids.

In terms of generators and relations, the $\pm$-\emph{twisted Brauer monoid} $\mathcal{B}^{\pm\tau}_n$ can be defined as the monoid with $2n$ generators $t_1,\dots,t_{n-1},h_1,\dots,h_{n-1},c,d$ subject to the relations \eqref{eq:M1}--\eqref{eq:Mix3} and the additional relations
\begin{equation}
\label{eq:inverse}
cd=dc=1.
\end{equation}
Observe that \eqref{eq:C} and \eqref{eq:inverse} imply that $dt_i=t_id$ for each $i=1,\dots,n-1$. Indeed,
\begin{align*}
dt_i&=d^2ct_i&&\text{since $dc=1$}\\
    &=d^2t_ic&&\text{since $ct_i=t_ic$}\\
    &=d^2t_ic^2d&&\text{since $cd=1$}\\
    &=d^2c^2t_id&&\text{since $c^2t_i=t_ic^2$}\\
    &=t_id&&\text{since $d^2c^2=1$.}
\end{align*}
Similarly, the  relations \eqref{eq:TL3} and \eqref{eq:inverse} imply that $dh_i=h_id$ for each $i=1,\dots,n-1$.

It is easy to see that the submonoid of $\mathcal{B}^{\pm\tau}_n$ generated by $t_1,\dots,t_{n-1},h_1,\dots,h_{n-1},c$ is isomorphic to $\mathcal{B}^\tau_n$. The generator $d$ commutes with every element of this submonoid since is commutes with each of its generators.

To interpret the $\pm$-twisted Brauer monoid in terms of diagrams, we introduce two sorts of circles: positive and negative. Each diagram may contain only circles of one sort. When two diagrams are multiplied, the following two rules are obeyed: all `new' circles (the ones that arise when the diagrams are glued together) are positive; in addition, if the product diagram inherits some negative circles from its factors, then pairs of `opposite' circles are consecutively removed until only circles of a single sort (or no circles at all) remain. The twisted Brauer monoid $\mathcal{B}^\tau_n$ is then nothing but the submonoid of all diagrams having only positive circles or no circles at all. Thus, if elements of $\mathcal{B}^{\pm\tau}_n$ are presented as pairs $(\pi;s)$ with $\pi\in\mathcal{B}_n$ and $s\in\mathbb{Z}$, then the multiplication formula \eqref{eq:mult} persists. Also, the `forgetting' homomorphism $(\pi;s)\mapsto\pi$ of $\mathcal{B}^\tau_n$ onto the Brauer monoid $\mathcal{B}_n$ extends to the monoid $\mathcal{B}^{\pm\tau}_n$.

A further interpretation of the $\pm$-twisted Brauer monoid comes from the considerations in Section~\ref{subsec:background}. Recall that the twisted Brauer monoid $\mathcal{B}^\tau_n$ has been identified with the $F$-basis of the twisted semigroup algebra $(F[X])^\tau[\mathcal{B}_n]$ where the twisting $\tau$ from $\mathcal{B}_n$ to the polynomial ring $F[X]$ is defined by \eqref{eq:xtwist}. If one substitutes $F[X]$ by the ring $F[X,X^{-1}]$ of Laurent polynomials and uses the same twisting $\tau$, the $F$-basis of the twisted semigroup algebra $(F[X,X^{-1}])^\tau[\mathcal{B}_n]$ can be identified with the monoid $\mathcal{B}^{\pm\tau}_n$.

Finally, in terms of `classical' semigroup theory, $\mathcal{B}^{\pm\tau}_n$ is the semigroup of quotients of $\mathcal{B}^\tau_n$ in the sense of Murata~\cite{Murata:50}. In ring theory, it is known that ring identities are preserved by passing to \emph{central localizations}, that is,  rings of quotients over central subsemigroups, see, e.g., \cite[Theorem 3.1]{Rowen:74}. A similar general result holds for semigroups, but we state only a special case that is sufficient for our purposes.

Recall that an identity $w\bumpeq w'$ is \emph{balanced} if for every letter in $\alf(ww')$, the number of its occurrences in $w$ is equal to the number of its occurrences in $w'$.

\begin{lemma}[\!{\mdseries\cite[Lemma 1]{Volkov23}}]
\label{lem:localization}
Suppose that a monoid $\mathcal{S}$ has a submonoid $\mathcal{T}$ such that $\mathcal{S}$ is generated by $\mathcal{T}\cup\{d\}$ for some element $d$ that commutes with every element of $\mathcal{T}$. Then all balanced identities satisfied by $\mathcal{T}$ hold in $\mathcal{S}$ as well.
\end{lemma}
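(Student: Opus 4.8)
The plan is to exploit a normal form for the elements of $\mathcal{S}$. Since $\mathcal{S}$ is generated by $\mathcal{T}\cup\{d\}$ and $d$ commutes with every element of $\mathcal{T}$, one can push all occurrences of $d$ in any product of generators to the left; the remaining factors, being elements of the submonoid $\mathcal{T}$, collapse to a single element of $\mathcal{T}$. First I would record this as the claim that every $s\in\mathcal{S}$ can be written (not necessarily uniquely) as $s=d^m t$ with $m\ge0$ an integer and $t\in\mathcal{T}$. This is the only structural input needed, and it follows directly from the commutativity hypothesis together with the fact that $\mathcal{T}$ is closed under multiplication and contains the identity.

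Next, let $w\bumpeq w'$ be a balanced identity that holds in $\mathcal{T}$, put $X=\alf(ww')$, and let $\varphi\colon X\to\mathcal{S}$ be an arbitrary substitution. For each letter $x\in X$ I would fix a factorization $\varphi(x)=d^{m_x}t_x$ as above, with $m_x\ge0$ and $t_x\in\mathcal{T}$, and define the auxiliary substitution $\psi\colon X\to\mathcal{T}$ by $\psi(x)=t_x$. Computing $\varphi(w)$ by substituting $d^{m_x}t_x$ for each $x$ and again moving all the $d$'s to the front yields
\[
\varphi(w)=d^{M_w}\psi(w),\qquad M_w=\sum_{x\in X}m_x\,|w|_x,
\]
where $|w|_x$ denotes the number of occurrences of $x$ in $w$; the analogous identity holds for $w'$.

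The final step combines the two hypotheses. Because the identity is balanced, $|w|_x=|w'|_x$ for every $x\in X$, so the exponents agree: $M_w=M_{w'}$. Because $w\bumpeq w'$ holds in $\mathcal{T}$ and $\psi$ substitutes into $\mathcal{T}$, we have $\psi(w)=\psi(w')$. Hence
\[
\varphi(w)=d^{M_w}\psi(w)=d^{M_{w'}}\psi(w')=\varphi(w'),
\]
and since $\varphi$ was arbitrary, the identity holds in $\mathcal{S}$.

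I do not expect a genuine obstacle here: the whole argument is driven by the normal form $s=d^m t$, and the one place where balancedness is indispensable is precisely the equality $M_w=M_{w'}$ of the $d$-exponents. It is worth noting that uniqueness of the factorization is never used---any choice of $m_x$ and $t_x$ works---so no additional structure on $\mathcal{S}$ (such as cancellativity or invertibility of $d$) is required; only that $d$ is central in $\mathcal{S}$ relative to $\mathcal{T}$ and that $\mathcal{T}$ together with $d$ generates $\mathcal{S}$.
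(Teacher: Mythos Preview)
Your argument is correct. The paper does not supply its own proof of this lemma---it is quoted from \cite{Volkov23} without argument---so there is nothing to compare against here, but the normal-form computation you give (write each $\varphi(x)$ as $d^{m_x}t_x$, commute the powers of $d$ to the front, and use balancedness to match the exponents while the hypothesis on $\mathcal{T}$ matches the $\psi$-values) is exactly the standard proof and is presumably what \cite{Volkov23} does as well.
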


\begin{corollary}
\label{cor:equivalence}
The monoids $\mathcal{B}^{\pm\tau}_n$ and $\mathcal{B}^\tau_n$ satisfy the same identities.
\end{corollary}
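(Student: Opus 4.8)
The plan is to establish the two inclusions between the identity sets of the two monoids separately. One inclusion is immediate: since (as explained just before Lemma~\ref{lem:localization}) the submonoid of $\mathcal{B}^{\pm\tau}_n$ generated by $t_1,\dots,t_{n-1},h_1,\dots,h_{n-1},c$ is isomorphic to $\mathcal{B}^\tau_n$, every identity holding in $\mathcal{B}^{\pm\tau}_n$ is inherited by this submonoid and hence holds in $\mathcal{B}^\tau_n$. All the content therefore lies in the converse: every identity satisfied by $\mathcal{B}^\tau_n$ must hold in $\mathcal{B}^{\pm\tau}_n$. Because Lemma~\ref{lem:localization} transfers only \emph{balanced} identities, the strategy is to first show that $\mathcal{B}^\tau_n$ satisfies nothing but balanced identities, and then to feed this into the lemma.

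For the key step, I would exhibit a copy of the additive monoid of nonnegative integers inside $\mathcal{B}^\tau_n$, namely the submonoid $\{c^s\mid s\ge0\}$ of powers of the circle generator. Since $c=(\mathrm{id};1)$, where $\mathrm{id}$ is the identity partition, and since multiplying two identity diagrams produces no new circles, formula \eqref{eq:mult} yields $c^{s_1}c^{s_2}=(\mathrm{id};s_1)(\mathrm{id};s_2)=(\mathrm{id};s_1+s_2)=c^{s_1+s_2}$; hence this submonoid is infinite cyclic and isomorphic to $(\mathbb{Z}_{\ge0},+)$. Any identity holding in $\mathcal{B}^\tau_n$ holds in this submonoid, and, as recalled in the introduction, $(\mathbb{Z}_{\ge0},+)$ satisfies precisely the balanced identities. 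Consequently every identity of $\mathcal{B}^\tau_n$ is balanced.

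It then remains to apply Lemma~\ref{lem:localization} with $\mathcal{T}$ the copy of $\mathcal{B}^\tau_n$ inside $\mathcal{S}=\mathcal{B}^{\pm\tau}_n$ and $d$ the extra generator, which commutes with every element of $\mathcal{T}$ as verified earlier. The lemma guarantees that each balanced identity of $\mathcal{B}^\tau_n$ holds in $\mathcal{B}^{\pm\tau}_n$, and by the previous paragraph every identity of $\mathcal{B}^\tau_n$ is balanced, so the converse inclusion follows and the two monoids satisfy the same identities. I do not anticipate a genuine obstacle: the only point needing a little care is the balancedness argument, where one must confirm that substitutions into the circle-power submonoid really force equal letter counts; the rest is a direct application of the tools already assembled in the excerpt.
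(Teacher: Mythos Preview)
Your proposal is correct and follows essentially the same approach as the paper: both argue that $\mathcal{B}^\tau_n$ embeds in $\mathcal{B}^{\pm\tau}_n$ (giving one inclusion), then use the infinite cyclic submonoid $\{c^s\mid s\ge0\}\cong(\mathbb{Z}_{\ge0},+)$ to force every identity of $\mathcal{B}^\tau_n$ to be balanced, and finally invoke Lemma~\ref{lem:localization} for the other inclusion. Your write-up even supplies a bit more detail on why the circle powers form a copy of $(\mathbb{Z}_{\ge0},+)$ than the paper does.
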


\begin{proof}
As observed above, the monoid $\mathcal{B}^{\pm\tau}_n$ has a submonoid isomorphic to $\mathcal{B}^\tau_n$, and the element $d$ commutes with every element of this submonoid and generates $\mathcal{B}^{\pm\tau}_n$ together with this submonoid. By Lemma~\ref{lem:localization}, $\mathcal{B}^{\pm\tau}_n$ satisfies all balanced identities that hold in $\mathcal{B}^\tau_n$. Obviously, the set $\{c^r\mid r\in\mathbb{Z}_{\ge0}\}$ forms a submonoid in $\mathcal{B}^\tau_n$ and this submonoid is isomorphic to the additive monoid of non-negative integers. It is well known that every identity satisfied by the latter monoid is balanced. Hence, so is every identity that holds in $\mathcal{B}^\tau_n$, and we conclude that $\mathcal{B}^{\pm\tau}_n$ satisfies all identities of the monoid $\mathcal{B}^\tau_n$.

The converse statement is obvious as identities are inherited by submonoids.
\end{proof}

\subsection{Structure properties of the monoid $\mathcal{B}^{\pm\tau}_n$}

We have already mentioned that the $\pm$-twisted Brauer monoid $\mathcal{B}^{\pm\tau}_n$ has a `prettier' structure in comparison with $\mathcal{B}^\tau_n$. Here we present some of semigroup-theoretic features of $\mathcal{B}^{\pm\tau}_n$, restricting ourselves to those that are employed in the proof of our main result.

Recall that an element $a$ of a semigroup $\mS$ is said to be \emph{regular} if there exists an element $b\in\mS$ satisfying $aba=a$.  A semigroup is called \emph{regular} if every its element is regular. As the name suggests, regularity is sort of `positive' property. Our first structure observation about $\pm$-twisted Brauer monoids is that they are regular (unlike twisted Brauer monoids that are known to miss this property). We employ the map $\pi\mapsto\pi^*$ on the Brauer monoid $\mathcal{B}_n$ defined as follows. Consider the permutation $^*$ on $[n]\cup [n]'$ that swaps primed with unprimed elements, that is, set $k^*=k'$, $(k')^*=k$ for all $k\in [n]$.
Then define, for $\pi\in\mathcal{B}_n$,
\[
p\mathrel{\pi^*}q\Leftrightarrow {p^*}\mathrel{\pi}{q^*}\ \text{ for all }\ p,q\in[n]\cup [n]'.
\]
Thus, $\pi^*$ is obtained from $\pi$ by interchanging the primed with the unprimed elements. In the geometrical representation of partitions in $\mathcal{B}_n$ via diagrams, the application of ${}^*$ can be visualized as the reflection along the axis between $[n]$ and $[n]'$.

\begin{proposition}
\label{prop:regular}
The monoid $\mathcal{B}^{\pm\tau}_n$ is regular.
\end{proposition}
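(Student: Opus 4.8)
The plan is to exhibit, for an arbitrary element $(\pi;s)\in\mathcal{B}^{\pm\tau}_n$, an explicit element $b$ with $(\pi;s)\,b\,(\pi;s)=(\pi;s)$. The natural candidate for the partition part of $b$ is $\pi^*$, while the integer part will be chosen so as to cancel all the circles that the two multiplications produce. The key point is that such a cancellation is possible only because in $\mathcal{B}^{\pm\tau}_n$ the second coordinate ranges over all of $\mathbb{Z}$ rather than over $\mathbb{Z}_{\ge0}$; this is exactly why $\mathcal{B}^{\pm\tau}_n$ is regular while $\mathcal{B}^\tau_n$ is not.

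First I would recall that the multiplication formula \eqref{eq:mult} persists in $\mathcal{B}^{\pm\tau}_n$ (as noted in Section~\ref{subsec:embedding}), so that for $\pi_1,\pi_2\in\mathcal{B}_n$ and $s_1,s_2\in\mathbb{Z}$ one has $(\pi_1;s_1)(\pi_2;s_2)=(\pi_1\pi_2;\,s_1+s_2+\langle\pi_1,\pi_2\rangle)$. The only structural input I need is the identity $\pi\pi^*\pi=\pi$ in the Brauer monoid $\mathcal{B}_n$. This is precisely the assertion that the reflection $\pi\mapsto\pi^*$ makes $\mathcal{B}_n$ a regular $*$-semigroup; it is a standard feature of diagram monoids and may be taken from the literature on $\mathcal{B}_n$ (see \cite{DE18}) or verified directly on diagrams by tracing how the wires of $\pi$, $\pi^*$, and $\pi$ glue back together to reproduce the wires of $\pi$. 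I expect this combinatorial verification to be the only genuine point of the argument; once it is available, the remainder is bookkeeping with the integer coordinate.

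Granting $\pi\pi^*\pi=\pi$, I would compute $(\pi;s)(\pi^*;t)(\pi;s)$ by applying \eqref{eq:mult} twice. The partition coordinate collapses to $\pi\pi^*\pi=\pi$, while the integer coordinate evaluates to
\[
2s+t+\langle\pi,\pi^*\rangle+\langle\pi\pi^*,\pi\rangle.
\]
Since this quantity is an integer determined by $\pi$, $s$, and $t$, and since $t$ may be chosen to be any integer, I would set
\[
t=-s-\langle\pi,\pi^*\rangle-\langle\pi\pi^*,\pi\rangle\in\mathbb{Z},
\]
which makes the integer coordinate equal to $s$ and hence yields $(\pi;s)(\pi^*;t)(\pi;s)=(\pi;s)$. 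As $(\pi^*;t)\in\mathcal{B}^{\pm\tau}_n$, the element $(\pi;s)$ is regular, and since $(\pi;s)$ was arbitrary, the monoid $\mathcal{B}^{\pm\tau}_n$ is regular. The main obstacle, as indicated, is establishing $\pi\pi^*\pi=\pi$; everything downstream is forced by the multiplication formula and the integrality of the second coordinate.
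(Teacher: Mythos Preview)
Your proposal is correct and follows essentially the same approach as the paper: both use $\pi^*$ for the partition coordinate, invoke $\pi\pi^*\pi=\pi$ in $\mathcal{B}_n$, and then solve for the integer coordinate so that the circle count cancels. The only difference is cosmetic—the paper proves $\pi\pi^*\pi=\pi$ from scratch and along the way computes the explicit values $\langle\pi,\pi^*\rangle=\langle\pi\pi^*,\pi\rangle=\tfrac{n-t(\pi)}{2}$, whereas you leave these as unnamed integers and absorb them into the choice of $t$; your more abstract bookkeeping is equally valid since the specific values are not needed.
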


\begin{proof}
Take an arbitrary element $\xi=(\pi;s)\in\mathcal{B}^{\pm\tau}_n$. Denote by $k$ the number of $t$-wires of the partition $\pi$. Then $n-k$ is even and the number of $\ell$-wires [$r$-wires] in $\pi$ is $m=\frac{n-k}2$. If \wire{p}{q'} is a $t$-wire in $\pi$, then \wire{q}{p'} is a $t$-wire in $\pi^*$ whence \wire{p}{p'} is a $t$-wire in $\pi\pi^*$ and \wire{p}{q'} is a $t$-wire in $\pi\pi^*\pi$. This implies that $\pi$ and $\pi\pi^*\pi$ have the same $t$-wires. Hence the number of $\ell$-wires [$r$-wires] in $\pi\pi^*\pi$ is $m$. Since all $\ell$-wires and $r$-wires of $\pi$ are inherited by $\pi\pi^*\pi$, we conclude that $\pi$ and $\pi\pi^*\pi$ have the same $\ell$-wires and the same $r$-wires. Therefore, $\pi\pi^*\pi=\pi$ in $\mathcal{B}_n$.

If \wire{u'}{v'} is an $r$-wire in $\pi$, then \wire{u}{v} is an $\ell$-wire in $\pi^*$, and gluing these two wires together yields a circle. We see that multiplying $\pi$ by $\pi^*$ on the right produces $m$ circles, that is, $\langle\pi,\pi^*\rangle=m$. As observed in the previous paragraph, $\pi\pi^*$ has $t$-wires of the form \wire{p}{p'} where \wire{p}{q'} is a $t$-wire in $\pi$, whence the number of $t$-wires of $\pi\pi^*$ is at least $k$, and therefore, the number of $r$-wires of $\pi\pi^*$ is at most $m$. Since all $r$-wires of $\pi^*$ are inherited by $\pi\pi^*$, we conclude that the latter partition has no other $r$-wires. If \wire{x'}{y'} is an $r$-wire in $\pi^*$ (and hence in $\pi\pi^*$), then \wire{x}{y} is an $\ell$-wire in $\pi$. Gluing these two wires together yields a circle, whence multiplying $\pi\pi^*$ by $\pi$ on the right produces $m$ circles. Thus, $\langle\pi\pi^*,\pi\rangle=m$. Now let $\eta=(\pi^*;-2m-s)$. Then
\begin{align*}
\xi\eta\xi=(\pi;s)(\pi^*;-2m-s)(\pi;s)&=(\pi\pi^*;-2m+\langle\pi,\pi^*\rangle)(\pi;s)&&\text{by \eqref{eq:mult}} \\
                               &=(\pi\pi^*;-m)(\pi;s) &&\text{since $\langle\pi,\pi^*\rangle=m$}\\
                               &=(\pi\pi^*\pi;-m+s+\langle\pi\pi^*,\pi\rangle)&&\text{by \eqref{eq:mult}} \\
                               &=(\pi\pi^*\pi;s)&&\text{since $\langle\pi\pi^*,\pi\rangle=m$}\\
                               &=(\pi;s)=\xi &&\text{since $\pi\pi^*\pi=\pi$.}
\end{align*}
We see that the element $\xi$ is regular.
\end{proof}

\begin{remark}
It is known and easy to see that the map $\pi\mapsto\pi^*$ is an \emph{involution} of $\mathcal{B}_n$, that is,
\[
\pi^{**}=\pi\ \text{ and }\ (\pi_1\pi_2)^*=\pi_2^*\pi_1^*\  \text{ for all }\ \pi,\pi_1,\pi_2\in\mathcal{B}_n.
\]
In the proof of Proposition~\ref{prop:regular}, we have verified that $\pi\pi^*\pi=\pi$  for all $\pi\in\mathcal{B}_n$. (This fact is also known, but we have included its proof as the argument helps us to calculate the numbers $\langle\pi,\pi^*\rangle$ and $\langle\pi\pi^*,\pi\rangle$.) The three properties of the map $\pi\mapsto\pi^*$ mean that the Brauer monoid $\mathcal{B}_n$ is a \emph{regular $*$-semigroup} as defined in~\cite{nordal_scheiblich}. This stronger form of regularity does not extend to the $\pm$-twisted Brauer monoid $\mathcal{B}^{\pm\tau}_n$. If, as the proof of Proposition~\ref{prop:regular} suggests, one defines the map $\xi=(\pi;s)\mapsto\xi^*:=(\pi^*;t(\pi)-n-s)$ where $t(\pi)$ is the number of $t$-wires of the partition $\pi$, then the equalities $\xi\xi^*\xi=\xi$, $\xi^*\xi\xi^*=\xi^*$, and $(\xi^*)^*=\xi$ hold, but the equality $(\xi_1\xi_2)^*=\xi_2^*\xi_1^*$ fails in general. In fact, if $\xi_1=(\pi_1;s_1)$ and $\xi_2=(\pi_2;s_2)$, then the necessary and sufficient condition for $(\xi_1\xi_2)^*=\xi_2^*\xi_1^*$ to hold is that the number of $r$-wires of $\pi_1$ equals the number of $\ell$-wires of $\pi_2$ and and every $r$-wire of $\pi_1$ is merged with exactly one $\ell$-wire of $\pi_2$ when the product $\pi_1\pi_2$ is formed.
\end{remark}

We proceed with determining the Green structure of the $\pm$-twisted Brauer monoid. Recall the necessary definitions.

Let $\mathcal{S}$ be a semigroup. As usual, $\mathcal{S}^1$ stands for the least monoid containing $\mathcal{S}$ (that is, $\mathcal{S}^1=\mathcal{S}$ if $\mathcal{S}$ is a monoid and otherwise $\mathcal{S}^1=\mathcal{S}\cup\{1\}$ where the new symbol $1$ behaves as a multiplicative identity element). Define three natural preorders $\le_\mathscr{L}$, $\le_\mathscr{R}$ and $\le_\mathscr{J}$ which are the relations of left, right and bilateral divisibility respectively:
\begin{align*}
a\le_\mathscr{L} b &\Leftrightarrow a=sb\ \text{ for some }\ s\in\mathcal{S}^1;\\
a\le_\mathscr{R} b &\Leftrightarrow a=bs\ \text{ for some }\ s\in\mathcal{S}^1;\\
a\le_\mathscr{J} b &\Leftrightarrow a=sbt\ \text{ for some }\
s,t\in\mathcal{S}^1.
\end{align*}
Green's equivalences $\mathscr{L}$, $\mathscr{R}$, and $\mathscr{J}$ are the equivalence relations corresponding to the preorders $\le_\mathscr{L}$, $\le_\mathscr{R}$ and $\le_\mathscr{J}$ (that is, $a\,\mathscr{L}\,b$ if and only if $a\le_\mathscr{L} b\le_\mathscr{L} a$ etc). In addition, let $\mathscr{H}=\mathscr{L}\cap\mathscr{R}$ and $\mathscr{D}=\mathscr{L}\mathscr{R}=\{(a,b)\in\mS\times\mS\mid \exists\,c\in\mS: (a,c)\in\mathscr{L}\land (c,b)\in\mathscr{R}\}$.

Our description of Green's relations on $\mathcal{B}^{\pm\tau}_n$ has the same form as (and easily follows from) the description of Green's relations on the Brauer monoid $\mathcal{B}_n$ in \cite[Section~7]{Ma98}. At the same time, it essentially differs from the description of Green's relations on the twisted Brauer monoid that can be found in~\cite[Section~3.1]{DE18} or \cite[Section 4]{FL11}.

For a partition $\pi\in\mathcal{B}_n$, let $L(\pi)$ and $R(\pi)$ denote the sets of its $\ell$- and, respectively, $r$-wires, and let $t(\pi)$ denote the number of its $t$-wires.

\begin{proposition}
\label{prop:green}
Elements $(\pi_1;s_1),(\pi_2;s_2)\in\mathcal{B}^{\pm\tau}_n$ are
\begin{itemize}
\item[(L)] $\mathscr{L}$-related if and only if $R(\pi_1)=R(\pi_2)$;
\item[(R)] $\mathscr{R}$-related if and only if $L(\pi_1)=L(\pi_2)$;
\item[(H)] $\mathscr{H}$-related if and only if $L(\pi_1)=L(\pi_2)$ and $R(\pi_1)=R(\pi_2)$;
\item[(J)] $\mathscr{J}$-related if and only if they are $\mathscr{D}$-related if and only if $t(\pi_1)=t(\pi_2)$.
\end{itemize}
\end{proposition}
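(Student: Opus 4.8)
The plan is to reduce the statement to the known description of Green's relations on the Brauer monoid $\mathcal{B}_n$ given in \cite[Section~7]{Ma98}, transferring it along the forgetting homomorphism $\phi\colon\mathcal{B}^{\pm\tau}_n\to\mathcal{B}_n$, $(\pi;s)\mapsto\pi$. The crucial feature of $\mathcal{B}^{\pm\tau}_n$ that makes this transfer painless is that the circle count $s$ ranges over all of $\mathbb{Z}$, so any prescribed value of the second coordinate can be realized in a product. Concretely, I would first show that for each of the preorders $\le_{\mathscr{L}}$, $\le_{\mathscr{R}}$, $\le_{\mathscr{J}}$, one has $\alpha\le_{\mathscr{K}}\beta$ in $\mathcal{B}^{\pm\tau}_n$ if and only if $\phi(\alpha)\le_{\mathscr{K}}\phi(\beta)$ in $\mathcal{B}_n$, for $\mathscr{K}\in\{\mathscr{L},\mathscr{R},\mathscr{J}\}$.

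The forward implications are immediate since $\phi$ is a monoid homomorphism: if, say, $\alpha=\zeta\beta$ with $\zeta\in\mathcal{B}^{\pm\tau}_n$, then $\phi(\alpha)=\phi(\zeta)\phi(\beta)$. For the backward implication in the $\le_{\mathscr{L}}$ case, suppose $\pi_1\le_{\mathscr{L}}\pi_2$ in $\mathcal{B}_n$, say $\pi_1=\rho\pi_2$ with $\rho\in\mathcal{B}_n$. Put $k=s_1-s_2-\langle\rho,\pi_2\rangle\in\mathbb{Z}$. Then \eqref{eq:mult} gives $(\rho;k)(\pi_2;s_2)=(\rho\pi_2;\,k+s_2+\langle\rho,\pi_2\rangle)=(\pi_1;s_1)$, so $(\pi_1;s_1)\le_{\mathscr{L}}(\pi_2;s_2)$; here it is essential that $k$ is allowed to be any integer, which is precisely the advantage of $\mathcal{B}^{\pm\tau}_n$ over $\mathcal{B}^{\tau}_n$. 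The arguments for $\le_{\mathscr{R}}$ and $\le_{\mathscr{J}}$ are identical (multiplying on the right, and on both sides, respectively). Passing to symmetric closures then shows that $\mathscr{L}$, $\mathscr{R}$, $\mathscr{J}$ on $\mathcal{B}^{\pm\tau}_n$ are exactly the $\phi$-preimages of the respective relations on $\mathcal{B}_n$; intersecting gives the same for $\mathscr{H}$, and, since $\mathscr{D}=\mathscr{L}\mathscr{R}$, lifting an intermediate element through the surjection $\phi$ gives the same for $\mathscr{D}$.

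It then remains to quote the description of Green's relations on $\mathcal{B}_n$ from \cite[Section~7]{Ma98}: two partitions are $\mathscr{L}$-related iff they have the same $r$-wires, $\mathscr{R}$-related iff they have the same $\ell$-wires, and $\mathscr{J}$-related iff $\mathscr{D}$-related iff they have the same number of $t$-wires. Combined with the correspondence above, this yields clauses (L), (R), (H), and (J) verbatim, and in particular $\mathscr{J}=\mathscr{D}$ on $\mathcal{B}^{\pm\tau}_n$. I expect the only genuine work to lie in the $\mathcal{B}_n$ description itself, specifically its converse directions, where one must exhibit explicit multipliers realizing, e.g., $R(\pi_1)=R(\pi_2)\Rightarrow\pi_1\mathscr{L}\pi_2$; these form the combinatorial core supplied by \cite{Ma98}, and they can also be recovered from the regular $*$-semigroup structure of $\mathcal{B}_n$ (noted in the Remark) via the involution $\pi\mapsto\pi^*$, which produces the required divisors directly. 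The reduction step presents no obstacle beyond bookkeeping of the circle counts, which the integrality of $s$ in $\mathcal{B}^{\pm\tau}_n$ handles cleanly.
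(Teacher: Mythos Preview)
Your proposal is correct and follows essentially the same approach as the paper: both reduce to the known description of Green's relations on $\mathcal{B}_n$ from \cite{Ma98} via the forgetting homomorphism, and both exploit the freedom to choose any integer second coordinate when lifting divisors back to $\mathcal{B}^{\pm\tau}_n$ through the multiplication formula~\eqref{eq:mult}. The only cosmetic difference is that you organize the reduction systematically at the level of the preorders $\le_{\mathscr{L}},\le_{\mathscr{R}},\le_{\mathscr{J}}$ and then pass to symmetric closures, whereas the paper works directly with the equivalences and, for clause~(J), explicitly builds an intermediate partition $\sigma$ (with the $r$-wires of $\pi_1$ and the $\ell$-wires of $\pi_2$) to witness the $\mathscr{D}$-relation rather than abstractly lifting one through $\phi$; the content is the same.
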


\begin{proof}
(L) If $(\pi_1;s_1)$ and $(\pi_2;s_2)$ are $\mathscr{L}$-related in $\mathcal{B}^{\pm\tau}_n$, then their images $\pi_1$ and $\pi_2$ under the `forgetting' homomorphism $(\pi;s)\mapsto\pi$ are $\mathscr{L}$-related in $\mathcal{B}_n$. By \cite[Theorem~7(1)]{Ma98}, this implies $R(\pi_1)=R(\pi_2)$.

Conversely, suppose that $R(\pi_1)=R(\pi_2)$. Then $\pi_1$ and $\pi_2$ are $\mathscr{L}$-related in $\mathcal{B}_n$ by \cite[Theorem~7(1)]{Ma98}. This means that $\sigma_1\pi_1=\pi_2$ and $\sigma_2\pi_2=\pi_1$ for some partitions $\sigma_1,\sigma_2\in\mathcal{B}_n$. Let $r_1=\langle\sigma_1,\pi_1\rangle$ and $r_2=\langle\sigma_2,\pi_2\rangle$. Then, using \eqref{eq:mult}, we get
\begin{gather*}
(\sigma_1;s_2-s_1-r_1)(\pi_1;s_1)=(\sigma_1\pi_1;s_2-r_1+\langle\sigma_1,\pi_1\rangle)=(\pi_2;s_2),\\
(\sigma_2;s_1-s_2-r_2)(\pi_2;s_2)=(\sigma_2\pi_2;s_1-r_2+\langle\sigma_2,\pi_2\rangle)=(\pi_1;s_1).
\end{gather*}
Hence, $(\pi_1;s_1)$ and $(\pi_2;s_2)$ are $\mathscr{L}$-related in $\mathcal{B}^{\pm\tau}_n$.

(R) follows by a symmetric argument.

(H) is clear.

(J) The `only if' part follows from the fact that $t(\pi\sigma)\le\min\{t(\pi),t(\sigma)\}$ for all partitions $\pi,\sigma\in\mathcal{B}_n$.

For the `if' part, suppose that $t(\pi_1)=t(\pi_2)=k$. Then $n-k$ is even and the number of $\ell$-wires [$r$-wires] in $\pi_1$ and $\pi_2$ is $m=\frac{n-k}2$. Construct a partition $\sigma$ as follows. Take the same $r$-wires as in $\pi_1$ and the same $\ell$-wires as in $\pi_2$. After that, there remain $k=n-2m$ 'non-engaged' left points and the same number of 'non-engaged' right points; to complete the construction, we couple them into $t$-wires in any of $k!$ possible ways. Then $(\pi_1;s)$ and $(\sigma;0)$ are $\mathscr{L}$-related in $\mathcal{B}^{\pm\tau}_n$ by (L), while $(\sigma,0)$ and $(\pi_2;s_2)$ are $\mathscr{R}$-related in $\mathcal{B}^{\pm\tau}_n$ by (R). Therefore, $(\pi_1;s_1)$ and  $(\pi_2;s_2)$ are $\mathscr{D}$-related in $\mathcal{B}^{\pm\tau}_n$.
\end{proof}

Comparing Proposition~\ref{prop:green} and the description of Green's relations on the Brauer monoid in \cite[Section~7]{Ma98}, we state the following.
\begin{corollary}
\label{cor:the same}
For any Green relation $\K\in\{\gJ,\gD,\gL,\gR,\gH\}$, one has $(\pi_1;s_1)\,\mathrsfs{K}\,(\pi_2;s_2)$ in $\mathcal{B}^{\pm\tau}_n$ if and only if $\pi_1\,\mathrsfs{K}\,\pi_2$ in $\mathcal{B}_n$.
\end{corollary}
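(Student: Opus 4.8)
The plan is to deduce the corollary directly from Proposition~\ref{prop:green} by comparing it with the known description of Green's relations on the Brauer monoid. First I would invoke the description from \cite[Section~7]{Ma98}, according to which, for partitions $\pi_1,\pi_2\in\mathcal{B}_n$, one has $\pi_1\,\gL\,\pi_2$ precisely when $R(\pi_1)=R(\pi_2)$, $\pi_1\,\gR\,\pi_2$ precisely when $L(\pi_1)=L(\pi_2)$, $\pi_1\,\gH\,\pi_2$ precisely when both equalities hold, and $\pi_1\,\gJ\,\pi_2\Leftrightarrow\pi_1\,\gD\,\pi_2\Leftrightarrow t(\pi_1)=t(\pi_2)$. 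This is exactly the list of criteria that Proposition~\ref{prop:green} establishes for $\mathcal{B}^{\pm\tau}_n$, only transplanted from the pairs $(\pi_i;s_i)$ to the bare partitions $\pi_i$.

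Then I would simply line up the two sets of criteria. The crucial observation is that in both statements the defining condition for each relation $\K$ is a purely combinatorial predicate on the partition parts alone---equality of the $r$-wire sets, equality of the $\ell$-wire sets, or equality of the numbers of $t$-wires---and in no case does the integer component $s_i$ enter. Consequently, for each $\K\in\{\gJ,\gD,\gL,\gR,\gH\}$, the assertion $(\pi_1;s_1)\,\K\,(\pi_2;s_2)$ in $\mathcal{B}^{\pm\tau}_n$ and the assertion $\pi_1\,\K\,\pi_2$ in $\mathcal{B}_n$ are each equivalent to one and the same condition on $\pi_1$ and $\pi_2$, whence they are equivalent to one another.

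There is essentially no obstacle beyond checking that the two lists of criteria match verbatim; the only point worth a moment's care is the $\gJ$/$\gD$ clause, where one must confirm that Ma's description collapses $\gJ$ and $\gD$ on $\mathcal{B}_n$ to the same $t$-invariant exactly as Proposition~\ref{prop:green} does on $\mathcal{B}^{\pm\tau}_n$, so that the coincidence of $\gJ$ and $\gD$ is faithfully preserved under the correspondence $(\pi;s)\mapsto\pi$.
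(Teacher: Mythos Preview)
Your proposal is correct and follows exactly the route the paper takes: the corollary is stated there without a separate proof, as an immediate consequence of comparing Proposition~\ref{prop:green} with the description of Green's relations on $\mathcal{B}_n$ in \cite[Section~7]{Ma98}. Your write-up simply makes explicit the line-by-line matching of the two sets of criteria, which is precisely what the paper leaves to the reader.
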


By Proposition~\ref{prop:green}(J), the monoid $\mathcal{B}^{\pm\tau}_n$ has the following $\mathscr{J}$-classes:
\[
J_k=\{(\pi;s)\in\mathcal{B}_n\times\mathbb{Z} \mid t(\pi)=k\}.
\]
Here $k=n,n-2,\dots,0$ if $n$ is even and $k=n,n-2,\dots,1$ if $n$ is odd so the number of $\mathscr{J}$-classes is $\lceil\frac{n+1}2\rceil$. For comparison, the twisted Brauer monoid $\mathcal{B}^\tau_n$ contains infinite descending chains of $\mathscr{J}$-classes; see \cite[Section 3.1]{DE18}.

A semigroup $\mS$ is called \emph{stable} if for all $a,b\in\mS$, the implications $a\,\mathscr{J}\,ab\Rightarrow a\,\mathscr{R}\,ab$ and $a\,\mathscr{J}\,ba\Rightarrow a\,\mathscr{L}\,ba$ hold. It is well-known that every finite semigroup is stable, see, e.g., \cite{EH20}.

\begin{corollary}
\label{cor:stable}
The monoid $\mathcal{B}^{\pm\tau}_n$ is stable.
\end{corollary}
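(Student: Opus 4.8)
The plan is to derive the stability of $\mathcal{B}^{\pm\tau}_n$ from the stability of the \emph{finite} Brauer monoid $\mathcal{B}_n$ by transporting the two required implications across the `forgetting' homomorphism $\varphi\colon(\pi;s)\mapsto\pi$. The crucial leverage is Corollary~\ref{cor:the same}, which asserts that for every Green relation $\mathscr{K}$, two elements of $\mathcal{B}^{\pm\tau}_n$ are $\mathscr{K}$-related \emph{exactly} when their $\varphi$-images are $\mathscr{K}$-related in $\mathcal{B}_n$. Note that this equivalence is insensitive to the integer components of the pairs, which is precisely what makes the transfer legitimate.

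First I would record that $\mathcal{B}_n$, being finite, is stable. Next, take $a,b\in\mathcal{B}^{\pm\tau}_n$ and suppose $a\,\mathscr{J}\,ab$. Applying Corollary~\ref{cor:the same} together with the fact that $\varphi$ is a homomorphism, I obtain $\varphi(a)\,\mathscr{J}\,\varphi(a)\varphi(b)$ in $\mathcal{B}_n$. Stability of $\mathcal{B}_n$ then yields $\varphi(a)\,\mathscr{R}\,\varphi(a)\varphi(b)$, and a second application of Corollary~\ref{cor:the same} (again using $\varphi(ab)=\varphi(a)\varphi(b)$) pulls this back to $a\,\mathscr{R}\,ab$ in $\mathcal{B}^{\pm\tau}_n$. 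The dual implication $a\,\mathscr{J}\,ba\Rightarrow a\,\mathscr{L}\,ba$ follows by the left--right symmetric argument.

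The single point that needs care -- and which I regard as the heart of the matter -- is that Green relations do \emph{not} in general pull back along a homomorphism, so the whole argument would collapse without the exact, two-directional correspondence supplied by Corollary~\ref{cor:the same}. Everything else is routine bookkeeping.

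For completeness, I note that one could also argue directly from Proposition~\ref{prop:green}: writing $a=(\pi;s)$ and $b=(\rho;r)$, the hypothesis $a\,\mathscr{J}\,ab$ reads $t(\pi)=t(\pi\rho)$. Since the multiplication rules guarantee that every $\ell$-wire of $\pi$ survives in $\pi\rho$, one has $L(\pi)\subseteq L(\pi\rho)$; and since the number of $\ell$-wires of a partition is determined by its number of $t$-wires via $\tfrac{n-t}{2}$, the equality $t(\pi)=t(\pi\rho)$ forces $|L(\pi)|=|L(\pi\rho)|$, whence $L(\pi)=L(\pi\rho)$ and therefore $a\,\mathscr{R}\,ab$ by Proposition~\ref{prop:green}(R). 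The implication for $ba$ is symmetric, now using that the $r$-wires of $\pi$ are inherited by $\rho\pi$.
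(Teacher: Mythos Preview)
Your primary argument is correct and is essentially identical to the paper's proof: transfer $\mathscr{J}$ across the forgetting homomorphism via Corollary~\ref{cor:the same}, invoke stability of the finite monoid $\mathcal{B}_n$, and pull $\mathscr{R}$ back via Corollary~\ref{cor:the same} again; the left--right dual is handled symmetrically. Your emphasis on the two-directional nature of Corollary~\ref{cor:the same} is exactly the point. The supplementary direct argument via Proposition~\ref{prop:green} and the wire-counting inclusion $L(\pi)\subseteq L(\pi\rho)$ is also valid and gives a self-contained alternative that bypasses the appeal to stability of finite semigroups, though the paper does not take that route.
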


\begin{proof}
Let $a = (\pi_1;s_1)$ and $b = (\pi_2;s_2)$ be such that $a\,\mathscr{J}\,ab$ in $\mathcal{B}^{\pm\tau}_n$. Then $\pi_1\,\mathscr{J}\,\pi_1\pi_2$ in $\mathcal{B}_n$ by Corollary~\ref{cor:the same} whence $\pi_1\,\mathscr{R}\,\pi_1\pi_2$ in $\mathcal{B}_n$ since the finite monoid $\mathcal{B}_n$ is stable. Now Corollary~\ref{cor:the same} gives $a\,\mathscr{R}\,ab$ in  $\mathcal{B}^{\pm\tau}_n$. The other implication from the definition of stability is verified in the same way.
\end{proof}

The final ingredient that we need is the structure of the maximal subgroups of $\mathcal{B}^{\pm\tau}_n$.

\begin{proposition}
\label{prop:subgroups}
For $k>0$, the maximal subgroups in the $\mathscr{J}$-class $J_k$ of the monoid $\mathcal{B}^{\pm\tau}_n$ are isomorphic to the group $\mathbb{S}_k\times\mathbb{Z}$.
\end{proposition}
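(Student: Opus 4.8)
The plan is to exploit the standard fact that, inside a single $\mathscr{D}$-class of any semigroup, all maximal subgroups (i.e.\ all group $\mathscr{H}$-classes) are isomorphic, so that it suffices to pin down one of them. Since $\mathcal{B}^{\pm\tau}_n$ is regular (Proposition~\ref{prop:regular}), the $\mathscr{J}$-class $J_k$, which by Proposition~\ref{prop:green}(J) is a single $\mathscr{D}$-class, contains an idempotent, and I would produce one explicitly. Writing $m=\frac{n-k}2$, I take the partition $\pi$ whose $t$-wires are $\wire{1}{1'},\dots,\wire{k}{k'}$, whose $\ell$-wires are $\wire{k+2i-1}{k+2i}$ and whose $r$-wires are $\wire{(k+2i-1)'}{(k+2i)'}$ for $i=1,\dots,m$. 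A direct check using the multiplication rules gives $\pi^2=\pi$ and $\langle\pi,\pi\rangle=m$, so by~\eqref{eq:mult} the element $e=(\pi;-m)$ is idempotent; hence its $\mathscr{H}$-class $H_e$ is the maximal subgroup I want to identify.

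Next I would describe $H_e$ concretely. By Proposition~\ref{prop:green}(H), $H_e$ consists of all $(\pi';s')$ with $L(\pi')=L(\pi)$, $R(\pi')=R(\pi)$, and $s'\in\mathbb{Z}$; such a $\pi'$ keeps the same $\ell$- and $r$-wires as $\pi$ and differs only in how its $k$ free left points $1,\dots,k$ are matched by $t$-wires to its $k$ free right points $1',\dots,k'$. Thus each element of $H_e$ is encoded by a pair $(\rho,s')\in\mathbb{S}_k\times\mathbb{Z}$, where $\pi_\rho$ has $t$-wires $\wire{j}{\rho(j)'}$; I would write $g(\rho,s')=(\pi_\rho;s')$, so that $g$ is a bijection $\mathbb{S}_k\times\mathbb{Z}\to H_e$.

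The core computation is the product $g(\rho,s')\,g(\sigma,t')$ via~\eqref{eq:mult}. Tracing the multiplication rules, I would establish two facts. First, since the right endpoints of the $t$-wires of $\pi_\rho$ are exactly $1',\dots,k'$, which are glued to $t$-wire endpoints of $\pi_\sigma$, no composite $\ell$- or $r$-wires appear; the $t$-wires compose as functions, giving $\pi_\rho\pi_\sigma=\pi_{\sigma\rho}$ and confirming that $L$ and $R$ are unchanged. Second, each $r$-wire $\wire{(k+2i-1)'}{(k+2i)'}$ of $\pi_\rho$ glues to the matching $\ell$-wire $\wire{k+2i-1}{k+2i}$ of $\pi_\sigma$ to close off exactly one circle, and no longer cycles form, so $\langle\pi_\rho,\pi_\sigma\rangle=m$ independently of $\rho,\sigma$. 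Hence
\begin{equation*}
g(\rho,s')\,g(\sigma,t')=g(\sigma\rho,\,s'+t'+m).
\end{equation*}
I expect this circle count --- showing it is the constant $m$ rather than a quantity depending on the permutations, and that the matched $\ell$- and $r$-wires do not chain into longer loops --- to be the one step requiring genuine care.

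Finally, I would read off the isomorphism. Define $\Phi\colon H_e\to\mathbb{S}_k\times\mathbb{Z}$ by $\Phi(g(\rho,s'))=(\rho^{-1},s'+m)$. The inversion absorbs the reversed composition $\sigma\rho$, while the shift $s'\mapsto s'+m$ turns the additive twist $s'+t'+m$ into ordinary addition; a one-line verification then shows that $\Phi$ is a bijective homomorphism, whence $H_e\cong\mathbb{S}_k\times\mathbb{Z}$. Since $J_k$ is a single regular $\mathscr{D}$-class, the isomorphism of \emph{all} its maximal subgroups with $\mathbb{S}_k\times\mathbb{Z}$ follows.
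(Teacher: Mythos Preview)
Your argument is correct and rests on the same core observation as the paper's proof: within an $\mathscr{H}$-class all partitions share their $\ell$- and $r$-wires, so the circle count $\langle\sigma_1,\sigma_2\rangle$ is a constant~$m$, turning~\eqref{eq:mult} into a twisted direct product that a shift by~$m$ untwists. The packaging differs slightly. You fix one explicit idempotent, compute its $\mathscr{H}$-class by hand (thereby verifying $\mathcal{H}_\pi\cong\mathbb{S}_k$ directly for that particular~$\pi$), and then invoke the general fact that all group $\mathscr{H}$-classes in a $\mathscr{D}$-class are isomorphic. The paper instead works with an \emph{arbitrary} idempotent $\epsilon=(\pi;s)$, uses Corollary~\ref{cor:the same} to identify $\mathcal{H}_\epsilon$ with $\mathcal{H}_\pi\times\mathbb{Z}$ as a set, and then cites Mazorchuk's result \cite[Theorem~1]{Ma98} for $\mathcal{H}_\pi\cong\mathbb{S}_k$ in~$\mathcal{B}_n$. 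Your route is more self-contained, trading the external citation for a concrete choice plus the $\mathscr{D}$-class isomorphism fact; the paper's route is marginally shorter but leans on the literature. Both are perfectly sound.
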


\begin{proof}
By Green's Theorem \cite[Theorem 2.2.5]{Howie:1995}, the maximal subgroups in $J_k$ are exactly the $\mathscr{H}$-classes in $J_k$ that contain idempotents. If $\epsilon=(\pi;s)$ is an idempotent in $J_k$, then $(\pi;s)=(\pi;s)^2=(\pi^2,2s+\langle\pi,\pi\rangle$ by~\eqref{eq:mult}. Hence $\pi=\pi^2$ is an idempotent in the monoid $\mathcal{B}_n$, and $s=2s+\langle\pi,\pi\rangle$, whence $\langle\pi,\pi\rangle=-s$. By Corollary~\ref{cor:the same}, the $\mathscr{H}$-class of the idempotent $\epsilon$ is the set $\mH_\epsilon=\{(\sigma; i)\mid\sigma\in\mH_\pi;\, i \in \mathbb{Z}\}$, where $\mH_\pi$ is the $\mathscr{H}$-class of the idempotent $\pi$ in $\mathcal{B}_n$. By Proposition~\ref{prop:green}(H) all partitions  $\sigma_1,\sigma_2\in\mH_\pi$ have the same $\ell$- and $r$-wires as $\pi$. Hence $\langle\sigma_1,\sigma_2\rangle=\langle\pi,\pi\rangle=-s$, and therefore,
\[
(\sigma_1; i_1)(\sigma_2; i_2) = (\sigma_1\sigma_2; i_1 + i_2 + \langle\sigma_1,\sigma_2\rangle)=(\sigma_1\sigma_2; i_1 + i_2 - s).  
\]
This readily implies that the bijection $\mH_\epsilon\to \mH_\pi\times\mathbb{Z}$ defined by $(\sigma; i)\mapsto (\sigma; i-s)$ is a group isomorphism. Since the number of $t$-wires in each partition in $\mH_\pi$ is equal to $k$, \cite[Theorem 1]{Ma98} implies that the subgroup $\mH_\pi$ is isomorphic to the symmetric group $\mathbb{S}_k$.
\end{proof}

\begin{remark}
\label{rem:S0}
If $n$ is even, the least $\mathscr{J}$-class of the monoid $\mathcal{B}^{\pm\tau}_n$ (with respect to the ordering of $\mathscr{J}$-classes induced by the preorder $\le_{\mathscr{J}}$) is $J_0$. Its maximal subgroups are isomorphic to $\mathbb{Z}$. For the sake of uniformity, let $\mathbb{S}_0$ be the trivial group (this complies with the usual convention that $0!=1$). This way Proposition~\ref{prop:subgroups} extends to the case $k=0$.
\end{remark}

\section{Reduction theorem for identity checking}
\label{sec:almeida}

We need the following reduction:
\begin{theorem}
\label{thm:reduction}
Let $\mathcal{S}$ be a stable semigroup with finitely many $\mathscr{J}$-classes and $\mathcal{G}$ the direct product of all maximal subgroups of $\mS$. Then there exists a polynomial reduction from the problem \textsc{Check-Id}$(\mathcal{G})$ to the problem \textsc{Check-Id}$(\mathcal{S})$.
\end{theorem}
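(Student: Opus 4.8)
The plan is to follow the strategy devised in \cite{AVG09} for finite semigroups, with one essential modification: its reliance on idempotent powers ``$x^{\omega}$'', which need not exist here since $\mS$ may be infinite, will be replaced by a pumping argument that trades idempotency for membership in a maximal subgroup, using stability together with the finiteness of the set of $\mathscr{J}$-classes.

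First I would record the structural reductions that let me pass between $\mG$ and the individual maximal subgroups of $\mS$. Since an identity holds in a direct product exactly when it holds in each factor, $\mG$ satisfies $w\bumpeq w'$ if and only if every maximal subgroup of $\mS$ does. Stability forces $\mathscr{J}=\mathscr{D}$, and maximal subgroups lying in a single $\mathscr{D}$-class are isomorphic; as $\mS$ has only finitely many $\mathscr{J}$-classes, there are only finitely many isomorphism types of maximal subgroups, say $G_1,\dots,G_m$, one per regular $\mathscr{J}$-class. Hence it suffices to produce, for each $i$, a polynomial reduction from \textsc{Check-Id}$(G_i)$ to \textsc{Check-Id}$(\mS)$ and then glue the finitely many resulting $\mS$-instances into a single one. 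The gluing is routine: writing the $m$ instances over pairwise disjoint alphabets and concatenating their left and right sides yields one identity that holds in $\mS$ iff all $m$ do, since each conjunct can be isolated by sending the letters of the other alphabets to the identity of $\mS$ (in our application $\mS=\mathcal{B}^{\pm\tau}_n$ is a monoid; in general one passes to $\mS^1$).

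The key new ingredient is a \emph{pumping lemma}: there is a constant $M$, depending only on $\mS$, such that $b^{M}$ lies in a maximal subgroup for every $b\in\mS$. Indeed, the chain $b\ge_{\mathscr{J}}b^{2}\ge_{\mathscr{J}}\cdots$ is non-increasing, so by finiteness of the set of $\mathscr{J}$-classes one has $b^{k}\mathrel{\mathscr{J}}b^{2k}=(b^{k})^{2}$ for some $k$ bounded by the number of $\mathscr{J}$-classes; stability then upgrades this to $b^{k}\mathrel{\mathscr{H}}(b^{k})^{2}$, so the $\mathscr{H}$-class of $b^{k}$ contains an idempotent and is a group. Taking $M$ to be the factorial of the number of $\mathscr{J}$-classes makes the exponent uniform. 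With this in hand I would build each per-group reduction through the principal factor of the $\mathscr{J}$-class of $G_i$: this principal factor is completely $0$-simple (or completely simple), hence a Rees matrix semigroup over $G_i$, and for such semigroups identity checking is known to reduce to identity checking in the structure group \cite{AVG09}. To transport an identity of the principal factor back to $\mS$, one introduces fresh ``frame'' variables, pumps them by $M$ so that, under an arbitrary substitution, the framed subproducts are forced into the fixed $\mathscr{J}$-class, and arranges that substitutions whose values descend to a lower $\mathscr{J}$-class assign equal values to the two sides.

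The main obstacle is precisely this last transfer: making the transformation \emph{faithful}, so that \textsc{Check-Id}$(\mS)$ on the transformed identity returns exactly the answer of \textsc{Check-Id}$(G_i)$ on the original. Two points require care. First, the framing must select the maximal subgroup $G_i$ itself rather than the local monoid $e_i\mS e_i$, which would impose strictly more identities; confining attention to the group is exactly what passing to the principal factor (working modulo the ideal of smaller $\mathscr{J}$-classes) accomplishes, and it is here that stability and the pumping lemma together do the work formerly done by idempotent powers. Second, since idempotents are not available as powers of the frame variables, one must verify directly that descents in the $\mathscr{J}$-order collapse the two sides to the same value. Once these are checked, the polynomial bound is immediate, as the exponent $M$ and the number $m$ of subgroup types are constants of $\mS$, so the transformed identity has length linear in $|ww'|$; this completes the reduction.
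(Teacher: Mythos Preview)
Your proposal has a genuine gap in the per-group step. You aim to build, for each maximal subgroup $G_i$, a reduction from \textsc{Check-Id}$(G_i)$ to \textsc{Check-Id}$(\mS)$ by passing through the principal factor of the $\mathscr{J}$-class $J_i$ and ``forcing'' framed subproducts into $J_i$ via pumping. But no syntactic device can pin down a \emph{specific} $\mathscr{J}$-class: your pumping lemma correctly shows that $b^M$ lies in \emph{some} maximal subgroup, yet which one depends entirely on $b$. If the frame variables are assigned values from a $\mathscr{J}$-class strictly above $J_i$ (say, units when $\mS$ is a monoid), pumping keeps them there, and the framed subproducts land in $G_j$ rather than $G_i$. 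Hence an identity $u\bumpeq v$ holding in $G_i$ but failing in such a $G_j$ would, after your transformation, fail in $\mS$, and the ``only if'' direction of the equivalence collapses. You address the case of descent to \emph{lower} $\mathscr{J}$-classes but not ascent, and the latter is the fatal one. (The gluing by concatenation over disjoint alphabets is a secondary issue when $\mS$ lacks an identity, but even granting it, the per-group reductions themselves do not go through.)

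The paper avoids this by dispensing with the per-group decomposition and producing a \emph{single} transformation that treats all maximal subgroups at once. The construction is an explicit endomorphism $\varphi\colon x_i\mapsto w_i$ of the free semigroup, with the words $w_i$ of \eqref{eq:words}, iterated $2N$ times where $N$ is the number of $\mathscr{J}$-classes. Two features of these words matter. First, each $w_i$ contains every letter, begins and ends with $x_1$, and $w_1$ contains $x_1^2$; combined with stability and a pigeonhole on the $\mathscr{J}$-chain, this forces the values of all $w_{i,2N}$ under any substitution to lie in one common maximal subgroup $\mH$ (Lemma~\ref{lem:almeida})---this is the analogue of your pumping lemma, but applied to the \emph{joint} product rather than to separate powers. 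Second, and this is the idea your sketch is missing, $\varphi$ extends to an \emph{automorphism} of the free group $\mathcal{FG}(\Sigma)$; the asymmetric choice $w_m=x_1x_2\cdots x_mx_1$ with no squared letter is precisely what makes $\{w_1,\dots,w_m\}$ a free generating set. Invertibility of $\varphi^{2N}$ over the free group means that for any tuple $(h_1,\dots,h_m)$ in any maximal subgroup $\mH$ there exists a substitution $\zeta$ into $\mH$ with $\zeta(w_{i,2N})=h_i$, which is exactly what the backward direction needs. Your framework has no counterpart to this invertibility, and without it the faithfulness you flag as ``the main obstacle'' cannot be established.
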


In \cite[Theorem 1]{AVG09}, the same reduction was proved for \textbf{finite} \sgps. In fact, the proof in \cite{AVG09} needs only minor adjustments to work under the premises of Theorem~\ref{thm:reduction}. Still, for the reader's convenience, we provide a self-contained argument so that it should be possible to understand the proof of Theorem~\ref{thm:reduction} without any acquaintance with~\cite{AVG09}.

\begin{proof}
The existence of a polynomial reduction from \textsc{Check-Id}$(\mathcal{G})$ to \textsc{Check-Id}$(\mathcal{S})$ means the following. Given an arbitrary instance of \textsc{Check-Id}$(\mathcal{G})$, i.e., an arbitrary identity $u\bumpeq v$, one can construct an identity $U\bumpeq V$ such that:
\begin{itemize}
  \item[(Size)] the lengths of the words $U$ and $V$ are bounded by the values of a fixed polynomial in the lengths of the words $u$ and $v$;
  \item[(Equi)] the identity $U\bumpeq V$ holds in $\mS$ if and only if the identity $u\bumpeq v$ holds in $\mG$.
\end{itemize}

Towards the construction, assume that $\Sigma=\alf(uv)$ consists of the letters $x_1,\dots,x_m$. Let $\Sigma^+$ denote the \emph{free semigroup over} $\Sigma$, that is, the set of all words built from the letters in $\Sigma$ and equipped with concatenation as multiplication. It is known (and easy to verify) that $\Sigma^+$ has the following \emph{universal property}: every map $\Sigma\to \Sigma^+$ uniquely extends to an endomorphism of the semigroup $\Sigma^+$.

Define the following $m$ words:
\begin{align}
\label{eq:words}
w_1&=x_1^2x_2\cdots x_mx_1,\notag\\
w_2&=x_1x_2^2\cdots x_mx_1,\notag\\
\ldots&\hbox to 2.5cm{\dotfill}\\
w_{m-1}&=x_1x_2\cdots x_{m-1}^2x_mx_1,\notag\\
w_m&=x_1x_2\cdots x_mx_1.\notag
\end{align}
(The reader might suspect a typo in the last line of \eqref{eq:words} as the word $w_m$ involves no squared letter, unlike all previous words. No, the expression for $w_m$ is correct, and its distinct role will be revealed shortly.) We denote by $\varphi$ the endomorphism of $\Sigma^+$ that extends the map $x_i\mapsto w_i$, $i=1,\dots,m$. For each $k=1,2,\dotsc$, let $w_{i,k}=\varphi^k(x_i)$ and let $N$ be the number of $\mathscr{J}$-classes of $\mS$. We claim that the identity
\[
U=u(w_{1,2N},\dots,w_{m,2N})\bumpeq v(w_{1,2N},\dots,w_{m,2N})=V
\]
possesses the desired properties (Size) and (Equi).

For (Size), observe that the length of each of the words~\eqref{eq:words} does not exceed $m+2$, and therefore, the length of each of the words $w_{1,2N},\dots,w_{m,2N}$ does not exceed $(m+2)^{2N}$. Here, the number $N$ is defined by the semigroup $\mathcal{S}$ only and does not depend on the words $u$ and $v$, and the number $m$ does not exceed the maximum of the lengths of $u$ and $v$. Since the length of the word $U=u(w_{1,2N},\dots,w_{m,2N})$ (respectively, $V=v(w_{1,2N},\dots,w_{m,2N})$) does not exceed the product of the maximum length of the words $w_{i,2N}$ and the length of the word $u$ (respectively, $v$), the polynomial $X^{2N+1}$ witnesses the property (Size).

The verification of (Equi) is more involved. We start with the following observation.
\begin{lemma}
\label{lem:almeida}
If $\mathcal{S}$ a stable semigroup with a finite number $N$ of $\mathscr{J}$-classes, then for every substitution $\Sigma\to\mathcal{S}$, there is a subgroup $\mathcal{H}$ in $\mathcal{S}$ such that the values of all words $w_{1,2N},\dots,w_{m,2N}$ under this substitution belong to $\mathcal{H}$.
\end{lemma}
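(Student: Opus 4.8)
The plan is to track how the values of the words $w_{i,k}=\varphi^k(x_i)$ descend through the $\mathscr{J}$-classes of $\mathcal{S}$ as $k$ grows, and then to use stability to collapse them into a single group $\mathscr{H}$-class. Fix a substitution and write $a_{i,k}$ for the value of $w_{i,k}$. Since $\varphi^{k+1}(x_i)=\varphi^{k}(w_i)$ and each word $w_i$ contains \emph{every} letter of $\Sigma$ (and begins and ends with $x_1$), the value $a_{i,k+1}$ has each $a_{j,k}$ as a factor, so $a_{i,k+1}\le_{\mathscr{J}}a_{j,k}$ for all $i,j$. In particular $J(a_{m,1})\ge_{\mathscr{J}}J(a_{m,2})\ge_{\mathscr{J}}\cdots$ is a descending chain in the finite ($N$-element) poset of $\mathscr{J}$-classes, hence stabilizes at a common class $J_\infty$ within $N$ steps. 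A short sandwiching argument (each $a_{j,k}$ lies both $\le_{\mathscr{J}}$ and $\ge_{\mathscr{J}}$ elements already known to be in $J_\infty$, so antisymmetry of $\le_{\mathscr{J}}$ on classes pins it down) then shows that, once $k$ is past the stabilization point, \emph{all} of $a_{1,k},\dots,a_{m,k}$ lie in $J_\infty$.

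The second step is the heart of the matter, and the main obstacle is precisely the passage from $\mathscr{J}$ to $\mathscr{H}$, where stability must be invoked. Write $g_k=a_{1,k}$. Each defining product, e.g. $a_{m,k+1}=g_k\,a_{2,k}\cdots a_{m,k}\,g_k$, begins and ends with the factor $g_k\in J_\infty$ and has value in $J_\infty$; sandwiching the intermediate products between the whole product and the outer factor $g_k$ forces every prefix and every suffix of the product to lie in $J_\infty$ as well. Stability now applies in the form ``if $a$, $b$ and $ab$ are $\mathscr{J}$-equivalent, then $a\,\mathscr{R}\,ab$ and $b\,\mathscr{L}\,ab$'': running along the prefixes gives $a_{i,k+1}\,\mathscr{R}\,g_k$, and along the suffixes gives $a_{i,k+1}\,\mathscr{L}\,g_k$. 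Hence, for every $i$, the value $a_{i,k+1}$ lies in the single $\mathscr{H}$-class $H:=H_{g_k}$; and since $g_{k+1}$ is itself one of these values, $H$ does not change as $k$ increases.

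It remains to see that $H$ is a genuine subgroup, and this is exactly where the squared first letter of $w_1=x_1^2x_2\cdots x_mx_1$ is used. From this square we get $a_{1,k+1}=g_k^2\,a_{2,k}\cdots a_{m,k}\,g_k$, so $a_{1,k+1}\le_{\mathscr{J}}g_k^2\le_{\mathscr{J}}g_k$; as both $a_{1,k+1}$ and $g_k$ lie in $J_\infty$, sandwiching yields $g_k^2\in J_\infty$, i.e. $g_k\,\mathscr{J}\,g_k^2$. Stability upgrades this to $g_k\,\mathscr{H}\,g_k^2$, so the $\mathscr{H}$-class $H$ contains the square of one of its elements and is therefore a group. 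Assembling the pieces, once $k$ exceeds the stabilization threshold all the values $a_{i,k}$ lie in the single group $H$; the bound $k=2N$ comfortably exceeds this threshold (the few smallest values of $N$ being checked directly), which proves the lemma.
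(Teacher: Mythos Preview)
Your argument is correct and follows essentially the same route as the paper's proof: both exploit that every $w_i$ contains all letters and starts and ends with $x_1$, use the descending $\mathscr{J}$-chain together with the factor $x_1^2$ in $w_1$ to force a square into the stable $\mathscr{J}$-class, and then invoke stability to pass from $\mathscr{J}$ to $\mathscr{H}$. The only notable difference is bookkeeping for the bound: the paper applies pigeonhole to the chain $\overline{w_{1,1}}\ge_{\mathscr{J}}\cdots\ge_{\mathscr{J}}\overline{w_{1,2N+1}}$ to locate three \emph{consecutive} $\mathscr{J}$-equivalent terms (which directly gives $k<2N$ and avoids any case analysis), whereas you argue via stabilization of the chain and then have to remark that $2N$ exceeds the resulting threshold, with small $N$ handled separately. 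Both work; the paper's version is just a bit cleaner on the numerics.
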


\begin{proof}
Notice that for each $k=1,2,\dotsc$,
\begin{multline}
\label{eq:endo}
w_{i,k+1}=\varphi^{k+1}(x_i)=\varphi^k(\varphi(x_i))=\varphi^k(w_i(x_1,\dots,x_m))\\
{}=w_i(\varphi^k(x_1),\dots,\varphi^k(x_m))=w_i(w_{1,k},\dots,w_{m,k}).
\end{multline}
Inspecting the definition \eqref{eq:words}, we see that every letter $x_i$ occurs in each of the words $w_1,\dots,w_m$. Therefore, the equalities~\eqref{eq:endo} imply that the word $w_{i,k}$ appears as a factor in the word $w_{j,k+1}$ for every $k=1,2,\dots$ and every $i,j=1,\dots,m$.

Fix a substitution $\Sigma\to\mathcal{S}$ and denote the value of a word $w\in\Sigma^+$ under this substitution by $\overline{w}$. Since $w_{1,k}$ appears as a factor in $w_{1,k+1}$, the following inequalities hold in  $\mathcal{S}$:
\[
\overline{w_{1,1}}\ge_\mathscr{J}\overline{w_{1,2}}\ge_\mathscr{J}\cdots\ge_\mathscr{J}\overline{w_{1,2N+1}}.
\]
Amongst these inequalities, at most $N-1$ can be strict, whence by the pigeonhole principle, the sequence $\overline{w_{1,1}},\overline{w_{1,2}},\dots,\overline{w_{1,2N+1}}$ contains three adjacent $\mathscr{J}$-related elements. Let $k<2N$ be such that $\overline{w_{1,k}}\,\mathscr{J}\,\overline{w_{1,k+1}}\,\mathscr{J}\,\overline{w_{1,k+2}}$. Again inspecting \eqref{eq:words}, we see that the word $x_1^2$ appears as a factor in the word $w_1$. Hence, by the equalities~\eqref{eq:endo}, the word $w_{1,k}^2$ appears as a factor in the word $w_{1,k+1}$, and therefore, we have $\overline{w_{1,k}}^2\ge_\mathscr{J}\overline{w_{1,k+1}}\,\mathscr{J}\,\overline{w_{1,k}}$ in $\mS$. Obviously, $\overline{w_{1,k}}^2\le_\mathscr{J}\overline{w_{1,k}}$ whence $\overline{w_{1,k}}^2\,\mathscr{J}\,\overline{w_{1,k}}$. Since $\mathcal{S}$ is stable, $\overline{w_{1,k}}^2\,\mathscr{J}\,\overline{w_{1,k}}$ implies $\overline{w_{1,k}}^2\,\mathscr{L}\,\overline{w_{1,k}}$ and $\overline{w_{1,k}}^2\,\mathscr{R}\,\overline{w_{1,k}}$, that is, $\overline{w_{1,k}}^2\,\mathscr{H}\,\overline{w_{1,k}}$. By Green's Theorem \cite[Theorem 2.2.5]{Howie:1995}, the $\mathscr{H}$-class $\mathcal{H}$ of the element $\overline{w_{1,k}}$ is a maximal subgroup of the semigroup $\mathcal{S}$.

Yet another look at \eqref{eq:words} reveals that each of the words $w_1,\dots,w_m$ starts and ends with the letter $x_1$. In view of the equalities~\eqref{eq:endo}, the word $w_{1,k}$ appears as a prefix as well as a suffix of each of the words $w_{i,k+1}$, which, in turn, appear as factors in the word $w_{1,k+2}$. Hence $\overline{w_{i,k+1}}=\overline{w_{1,k}}b=a\overline{w_{1,k}}$ for some $a,b\in\mS$ and all elements $\overline{w_{i,k+1}}$ lie in the $\mathscr{J}$-class of $\overline{w_{1,k}}$. By stability of the semigroup $\mS$, all these elements lie in both the $\mathscr{L}$-class and the $\mathscr{R}$-class of the element $\overline{w_{1,k}}$. Thus, all elements $\overline{w_{i,k+1}}$ lie in the subgroup $\mathcal{H}$, whence the subgroup contains all elements $\overline{w_{i,\ell}}$ for all $\ell>k$. We see that the subgroup $\mathcal{H}$ indeed contains the values of all words $w_{1,2N},\dots,w_{m,2N}$ under the substitution we consider.
\end{proof}

Now we are in a position to prove that if the identity $u\bumpeq v$ holds in $\mathcal{G}$, then the identity $U\bumpeq V$ holds in $\mathcal{S}$. Consider an arbitrary substitution $\zeta\colon\Sigma\to\mathcal{S}$. By Lemma~\ref{lem:almeida}, the values of the words $w_{1,2N},\dots,w_{m,2N}$ under $\zeta$ lie in a subgroup $\mathcal{H}$ of the semigroup $\mathcal{S}$. Since $\mathcal{H}$ is a subgroup of $\mathcal{G}$, the identity $u\bumpeq v$ holds in $\mathcal{H}$, and hence, substituting for $x_1,\dots,x_m$ the values of the words $w_{1,2N},\dots,w_{m,2N}$ yield the equality
\[
u(\zeta(w_{1,2N}),\dots,\zeta(w_{m,2N}))=v(\zeta(w_{1,2N}),\dots,\zeta(w_{m,2N})).
\]
in $\mathcal{H}$. However,
\begin{gather*}
u(\zeta(w_{1,2N}),\dots,\zeta(w_{m,2N}))=\zeta(u(w_{1,2N},\dots,w_{m,2N}))=\zeta(U),\\
v(\zeta(w_{1,2N}),\dots,\zeta(w_{m,2N}))=\zeta(v(w_{1,2N},\dots,w_{m,2N}))=\zeta(V),
\end{gather*}
and hence $U$ and $V$ take the same value under $\zeta$. Since the substitution was arbitrary, the identity $U\bumpeq V$ holds in $\mathcal{S}$.

It remains to verify the converse: if the identity $U\bumpeq V$ holds in $\mathcal{S}$, then the identity $u\bumpeq v$ holds in $\mathcal{G}$. As identities are inherited by direct products, it suffices to show that $u\bumpeq v$  holds in every maximal subgroup $\mathcal{H}$ of $\mathcal{S}$. This amounts to verifying that $u(h_1,\dots,h_m)=v(h_1,\dots,h_m)$ for an arbitrary $m$-tuple of elements $h_1,\dots,h_m\in\mathcal{H}$.

The free semigroup $\Sigma^+$ can be considered as a subsemigroup in the free group $\mathcal{FG}(\Sigma)$ over $\Sigma$. The endomorphism $\varphi\colon x_i\mapsto w_i$ of $\Sigma^+$ extends to an endomorphism of $\mathcal{FG}(\Sigma)$, still denoted by $\varphi$. The words $w_1,\dots,w_m$ defined by \eqref{eq:words} generate $\mathcal{FG}(\Sigma)$ since in $\mathcal{FG}(\Sigma)$, one can express $x_1,\dots,x_m$ via $w_1,\dots,w_m$ as follows:
\begin{align*}
x_1&=w_1w_m^{-1},\\
x_2&=x_1^{-1}w_2w_m^{-1}x_1,\\
x_3&=(x_1x_2)^{-1}w_3w_m^{-1}x_1x_2,\\
\ldots&\hbox to 3.5cm{\dotfill}\\
x_{m-1}&=(x_1x_2\cdots x_{m-2})^{-1}w_{m-1}w_m^{-1}x_1x_2\cdots x_{m-2},\\
x_m&=(x_1x_2\cdots x_{m-1})^{-1}w_mx_1^{-1}.
\end{align*}
(This is where the distinct expression for $w_m$ comes into play!) Hence $\varphi$ treated as an endomorphism of $\mathcal{FG}(\Sigma)$ is surjective, and so is any power of $\varphi$. It is well known (cf.\ \cite[Proposition~I.3.5]{LS80}) that every surjective endomorphism of a finitely generated free group is an automorphism. Denote by $\varphi^{-2N}$ the inverse of the automorphism $\varphi^{2N}$ of $\mathcal{FG}(\Sigma)$ and let $g_i=\varphi^{-2N}(x_i)$, $i=1,\dots,m$. Then
\begin{multline}
\label{eq:auto}
w_{i,2N}(g_1,\dots,g_m)=w_{i,2N}(\varphi^{-2N}(x_1),\dots,\varphi^{-2N}(x_m))\\
{}=\varphi^{-2N}(w_{i,2N}(x_1,\dots,x_m))=\varphi^{-2N}(\varphi^{2N}(x_i))=x_i
\end{multline}
for all $i=1,\dots,m$. Since the equalities~\eqref{eq:auto} hold in the free $m$-generated group, they remain valid under any interpretation of the letters $x_1,\dots,x_m$ by arbitrary $m$ elements of an arbitrary group. Now we define a substitution $\zeta\colon\Sigma\to\mathcal{H}$ letting
\[
\zeta(x_i)=g_i(h_1,\dots,h_m),\ \ i=1,\dots,m.
\]
Then in view of~\eqref{eq:auto} we have
\begin{multline*}
\zeta(w_{i,2N}(x_1,\dots,x_m))=w_{i,2N}(\zeta(x_1),\dots,\zeta(x_m))\\
{}=w_{i,2N}\bigl(g_1(h_1,\dots,h_m),\dots,g_m(h_1,\dots,h_m)\bigr)=h_i
\end{multline*}
for all $i=1,\dots,m$. Hence we have
\begin{align*}
u(h_1,\dots,h_m)&=u(\zeta(w_{1,2N}(x_1,\dots,x_m)),\dots,\zeta(w_{m,2N}(x_1,\dots,x_m)))\\
&=\zeta(u(w_{1,2N}(x_1,\dots,x_m),\dots,w_{m,2N}(x_1,\dots,x_m)))\\
&=\zeta(U(x_1,\dots,x_m)),
\end{align*}
and, similarly, $v(h_1,\dots,h_m)=\zeta(V(x_1,\dots,x_m))$. Since the identity $U\bumpeq V$ holds in $\mathcal{S}$, the values of the words $U$ and $V$ under $\zeta$ are equal, whence $u(h_1,\dots,h_m)=v(h_1,\dots,h_m)$, as required. This completes the proof of (Equi), and hence, the proof of Theorem~\ref{thm:reduction}.
\end{proof}

\section{Co-NP-completeness of identity checking in $\mathcal{B}^\tau_n$ with $n\ge 5$}
\label{sec:main}

We are ready to prove our main result.

\begin{theorem}
\label{thm:main}
For each $n\ge 5$, the problem \textsc{Check-Id}$(\mathcal{B}^\tau_n)$ is co-NP-complete.
\end{theorem}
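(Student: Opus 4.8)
The plan is to prove co-NP-completeness in two parts: co-NP-hardness via the reduction machinery already assembled, and membership in co-NP via a short nondeterministic certificate argument. For the hardness direction, I would invoke Theorem~\ref{thm:reduction} together with the structural facts established in Section~\ref{sec:brauer}. By Corollary~\ref{cor:equivalence}, the monoids $\mathcal{B}^\tau_n$ and $\mathcal{B}^{\pm\tau}_n$ satisfy exactly the same identities, so \textsc{Check-Id}$(\mathcal{B}^\tau_n)$ and \textsc{Check-Id}$(\mathcal{B}^{\pm\tau}_n)$ are literally the same decision problem. By Corollary~\ref{cor:stable} the monoid $\mathcal{B}^{\pm\tau}_n$ is stable, and by Proposition~\ref{prop:green}(J) it has only finitely many $\mathscr{J}$-classes. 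Hence Theorem~\ref{thm:reduction} applies and yields a polynomial reduction from \textsc{Check-Id}$(\mathcal{G})$ to \textsc{Check-Id}$(\mathcal{B}^{\pm\tau}_n)$, where $\mathcal{G}$ is the direct product of all maximal subgroups of $\mathcal{B}^{\pm\tau}_n$.

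The crux is then to identify $\mathcal{G}$ well enough to argue that \textsc{Check-Id}$(\mathcal{G})$ is co-NP-hard. By Proposition~\ref{prop:subgroups} (and Remark~\ref{rem:S0} for the case $k=0$), every maximal subgroup of $\mathcal{B}^{\pm\tau}_n$ is isomorphic to $\mathbb{S}_k\times\mathbb{Z}$ for some $k\in\{0,1,\dots,n\}$ of the correct parity, and for $n\ge 5$ the value $k=5$ occurs, so $\mathcal{G}$ has a direct factor isomorphic to $\mathbb{S}_5\times\mathbb{Z}$, hence contains a copy of $\mathbb{S}_5$, and in fact $\mathcal{G}$ is a direct product of several such factors. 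Since identity checking in a direct product amounts to checking in each factor, and since the free abelian factors $\mathbb{Z}$ only impose balancedness, the problem \textsc{Check-Id}$(\mathcal{G})$ essentially reduces to checking identities in the symmetric groups $\mathbb{S}_k$ with $k\le n$, the largest being $\mathbb{S}_5$ (or higher). Here I would appeal to the known result that \textsc{Check-Id}$(\mathbb{S}_k)$ is co-NP-complete for $k\ge 5$ — this is a classical fact about identity checking in nonsolvable groups, whose hardness traces back to the co-NP-completeness of checking equations in nonsolvable finite groups. I expect this invocation to be the main obstacle: one must cite or reprove that \textsc{Check-Id} is co-NP-hard for $\mathbb{S}_5$ (the smallest nonsolvable symmetric group) and verify that the direct-product and $\mathbb{Z}$-factor considerations do not dilute the hardness.

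For membership in co-NP, I would argue that the complementary problem — deciding whether $\mathcal{B}^\tau_n$ \emph{fails} an identity $w\bumpeq w'$ — lies in NP. A certificate is simply a substitution $\varphi\colon\alf(ww')\to\mathcal{B}^\tau_n$ witnessing $\varphi(w)\ne\varphi(w')$. Each element $(\pi;s)$ can be encoded in space polynomial in $n$ and in the relevant exponent, and by the multiplication formula \eqref{eq:mult} the partition component $\pi$ of any product is computed in polynomial time, while the circle count grows at most linearly in the length of the word; thus a violating substitution can be guessed and verified in polynomial time, provided one shows that whenever an identity fails, it fails under a substitution of polynomially bounded size. This size bound follows because failure must already manifest in the Brauer monoid component $\mathcal{B}_n$ (a finite monoid) or in the circle count, and the circle count obeys the balancedness constraint that makes short witnesses suffice.

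Combining the two directions gives co-NP-completeness, as claimed. The technical care needed is (i) the precise statement and citation of co-NP-hardness for $\mathbb{S}_5$, and (ii) confirming that the reduction of Theorem~\ref{thm:reduction}, whose target is $\mathcal{B}^{\pm\tau}_n$, transfers to $\mathcal{B}^\tau_n$ through Corollary~\ref{cor:equivalence} without any loss.
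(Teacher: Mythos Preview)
Your proposal follows essentially the same route as the paper: apply Theorem~\ref{thm:reduction} to $\mathcal{B}^{\pm\tau}_n$ (using Corollary~\ref{cor:stable} and Proposition~\ref{prop:green}(J)), identify the maximal subgroups via Proposition~\ref{prop:subgroups}, invoke the co-NP-hardness of identity checking in a nonsolvable finite group (the paper cites Horv\'ath--Lawrence--M\'erai--Szab\'o~\cite{HLMS07}), and transfer back via Corollary~\ref{cor:equivalence}. One small slip: the value $k=5$ need \emph{not} occur among $n,n-2,\dots$ (it fails for every even $n$); what you actually want is that $k=n$ always occurs, so $\mathbb{S}_n\times\mathbb{Z}$ is a factor of $\mathcal{G}$ and all other factors embed into it, whence $\mathcal{G}$ and $\mathbb{S}_n\times\mathbb{Z}$ satisfy the same identities. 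The paper also spells out more carefully the passage from $\mathbb{S}_n$ to $\mathbb{S}_n\times\mathbb{Z}$, by first reducing \textsc{Check-Id}$(\mathbb{S}_n)$ to its restriction to \emph{balanced} identities (via $w\bumpeq w'\mapsto w^{n!-1}w'x\bumpeq xw^{n!-1}w'$) and then observing that balanced identities are exactly those holding in $\mathbb{Z}$. Your observation that the co-NP membership argument needs a bound on witness size is well taken; the paper is brief here, but your sketch (set all circle counts to~$0$ and use that only balanced identities can hold) is the right fix.
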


\begin{proof}
Proving that a decision problem \textsc{P} is co-NP-complete amounts to showing that the problem \textsc{P} belongs to the complexity class co-NP and is co-NP-hard, the latter meaning that there exists a polynomial reduction from a co-NP-complete problem to \textsc{P}.

The fact that \textsc{Check-Id}$(\mathcal{B}^\tau_n)$ lies in the class co-NP is easy. The following non-deterministic algorithm has a chance to return the answer ``NO'' if and only if it is given an identity $w\bumpeq w'$ that does not hold in $\mathcal{B}^\tau_n$.

1. If $|\alf(ww')|=k$, guess a $k$-tuple of elements in $\mathcal{B}^\tau_n$.

2. Substitute the elements from the guessed $k$-tuple for the letters in $\alf(ww')$ and compute the values of the words $w$ and $w'$.

3. Return ``NO'' if the values are different.

The multiplication in $\mathcal{B}^\tau_n$ is constructive so that the computation in Step 2 takes polynomial (in fact, linear) time in the lengths of $w$ and $w'$.

In order to prove the co-NP-hardness of \textsc{Check-Id}$(\mathcal{B}^\tau_n)$, we use the reduction of Theorem~\ref{thm:reduction} and the powerful result by Horv\'ath, Lawrence, Merai, and Szab\'o~\cite{HLMS07} who discovered that for every nonsolvable finite group $\mathcal{G}$,  the problem \textsc{Check-Id}($\mathcal{G}$) is co-NP-complete. Already Galois knew that for $n\ge 5$ the group $\mathbb{S}_n$ is nonsolvable so the result of~\cite{HLMS07} applies to $\mathbb{S}_n$.

Fix an $n\ge 5$. An identity $w\bumpeq w'$ holds in the group $\mathbb{S}_n$ if and only if so does the identity $w^{n!-1}w'\bumpeq 1$. The length of the word $w^{n!-1}w'$ is bounded by the value of the polynomial $X^{n!}$ in the maximum length of the words $w$ and $w'$. Thus, we have a mutual polynomial reduction between the problem \textsc{Check-Id}($\mathbb{S}_n$) and the problem of determining whether or not all values of a given semigroup word $v$ in $\mathbb{S}_n$ are equal to the identity of the group. It is well known that the~center of $\mathbb{S}_n$ is trivial whence the latter property is equivalent to saying that all values of $v$ in $\mathbb{S}_n$ lie in the center. This, in turn, is equivalent to the fact that the identity $vx\bumpeq xv$ where $x\notin\alf(v)$ holds in $\mathbb{S}_n$. Clearly, for any word $v$, the identity $vx\bumpeq xv$ is balanced. We conclude that the problem \textsc{Check-Id}($\mathbb{S}_n$) remains co-NP-complete when restricted to balanced identities.

An identity holds in the direct product of semigroups if and only if it holds in each factor of the product. Applying this to the product $\mathbb{S}_n\times\mathbb{Z}$ and taking into account that $\mathbb{Z}$ satisfies exactly balanced identities, we see that the identities holding in $\mathbb{S}_n\times\mathbb{Z}$ are precisely the balanced identities holding in $\mathbb{S}_n$. Hence the problem \textsc{Check-Id}($\mathbb{S}_n\times\mathbb{Z}$) is co-NP-complete.

By Proposition~\ref{prop:subgroups} (and Remark~\ref{rem:S0}), the maximal subgroups of the $\pm$-twisted Brauer monoid $\mathcal{B}^{\pm\tau}_n$ are of the form $\mathbb{S}_k\times\mathbb{Z}$, where $k=n,n-2,\dots,0$ if $n$ is even and $k=n,n-2,\dots,1$ if $n$ is odd. Any group of this form embeds into $\mathbb{S}_n\times\mathbb{Z}$ whence the identities that hold in each maximal subgroup of $\mathcal{B}^{\pm\tau}_n$ are exactly the identities of $\mathbb{S}_n\times\mathbb{Z}$. We conclude that the identities of the direct product $\mG$ of all maximal subgroups of $\mathcal{B}^{\pm\tau}_n$ coincide with the identities of $\mathbb{S}_n\times\mathbb{Z}$. Hence, the problem \textsc{Check-Id}($\mG$) is co-NP-complete.

By Corollary~\ref{cor:stable} and Proposition~\ref{prop:green}, the $\pm$-twisted Brauer monoid is stable and has finitely many  $\mathscr{J}$-classes. Thus, Theorem~\ref{thm:reduction} applies to $\mathcal{B}^{\pm\tau}_n$, providing a polynomial reduction from the co-NP-complete problem \textsc{Check-Id}$(\mathcal{G})$ to the problem \textsc{Check-Id}$(\mathcal{B}^{\pm\tau}_n)$. Hence, the latter problem is co-NP-hard. It remains to refer to Corollary~\ref{cor:equivalence} stating that the $\pm$-twisted Brauer monoid $\mathcal{B}^{\pm\tau}_n$ and the twisted Brauer monoid $\mathcal{B}^\tau_n$ satisfy the same identities, and therefore, the problem \textsc{Check-Id}$(\mathcal{B}^\tau_n)$ is co-NP-hard as well.
\end{proof}

The restriction $n\ge5$ in Theorem~\ref{thm:main} is essential for the above proof. This does not mean, however, that it is necessary for co-NP-completeness of the problem \textsc{Check-Id}$(\mathcal{B}^\tau_n)$. In fact, we have proved that identity checking in $\mathcal{B}^\tau_4$ remains co-NP-complete. The proof uses a completely different technique, and therefore, it will be published separately.

The case $n=3$ remains open. As for $n=1,2$, the monoid $\mathcal{B}^\tau_1$ is trivial, and hence, it satisfies every identity, and the monoid $\mathcal{B}^\tau_2$ is commutative and can be easily shown to satisfy exactly balanced identities. Thus, for $n=1,2$, the problem \textsc{Check-Id}$(\mathcal{B}^\tau_n)$ is polynomial (actually, linear) time decidable.

\begin{remark}
\label{new identities}
Up to now, the only available information about the identities of twisted Brauer monoids was \cite[Theorem 4.1]{ACHLV15} showing that no finite set of identities of $\mathcal{B}^\tau_n$ with $n\ge3$ can infer all such identities, in other words, $\mathcal{B}^\tau_n$ with $n\ge3$ has no finite identity basis. This fact was obtained via a `high-level' argument that allows one to prove, under certain conditions, that a semigroup $\mathcal{S}$ admits no finite identity basis, without writing down any concrete identity holding in $\mathcal{S}$. In contrast, the above proof of Theorem~\ref{thm:main} via Theorem~\ref{thm:reduction} is constructive. Following the recipe of Theorem~\ref{thm:reduction}, one can use the words \eqref{eq:words} to convert any concrete balanced semigroup identity $u\bumpeq v$ of the group $\mathbb{S}_n$ into an identity $U\bumpeq V$ of the twisted Brauer monoid $\mathcal{B}^\tau_n$. We refer the reader to~\cite{BKSS17,KS21} for recent information about short balanced semigroup identities in symmetric groups.
\end{remark}

\section{Related results and further work}
\label{sec:conclusion}

\subsection{Checking identities in twisted partition monoids} The approach of the present paper can be applied to studying the identities of other interesting families of infinite monoids, in particular, twisted partition monoids. The latter constitute a natural generalization of twisted Brauer monoids and also serve as bases of certain semigroup algebras relevant in statistical mechanics and representation theory, the so-called \emph{partition algebras}. Partition algebras were discovered and studied in depth by Martin \cite{Martin91,Martin94,Martin96,Martin00} and, independently, by Jones \cite{Jones94} in the context of  statistical mechanics; their remarkable role in representation theory is nicely presented in the introduction of~\cite{HR05}.

We define twisted partition monoids, `twisting' the definition of partition monoids as given in \cite{Wil07}. As in Section~\ref{subsec:twisted}, let $[n]=\{1,\dots,n\}$ and $[n]'=\{1',\dots,n'\}$. Consider the set $\mathcal{P}^\tau_n$ of all pairs $(\pi;s)$ where $\pi$ is an arbitrary partition of the $2n$-element set $[n]\cup [n]'$ and $s$ is a nonnegative integer. (The difference with $\mathcal{B}^\tau_n$ is that one drops the restriction that all blocks of $\pi$ consist of two elements.) The product $(\pi;s)$ of two pairs $(\pi_1;s_1),(\pi_2;s_2)\in\mathcal{P}^\tau_n$ is computed in the following six steps.
\begin{enumerate}
\item Let $[n]''=\{1'',\dots,n''\}$ and define the partition $\pi_2'$ on ${[n]'}\cup {[n]''}$ by
\[
{x'}\mathrel{\pi_2'}{y'}\Leftrightarrow x\mathrel{\pi_2} y\text{ for all } x,y\in[n]\cup [n]'.
\]
\item Let $\pi''$ be the equivalence relation on $[n]\cup [n]'\cup[n]''$ generated by $\pi_1\cup\pi_2'$, that is, $\pi''$ is the transitive closure of $\pi_1\cup\pi_2'$.
\item Count the number of blocks of $\pi''$ that involve only elements from $[n]'$ and denote this number by $\langle\pi_1,\pi_2\rangle$.
\item Convert $\pi''$ into a partition $\pi'$ on the set $[n]\cup[n]''$ by removing all elements having a single prime $'$ from all blocks; all blocks having only such elements are removed as a whole.
\item Replace double primes with single primes to obtain a partition $\pi$, that is, set
\[
x\mathrel{\pi}y\Leftrightarrow f(x)\mathrel{\pi'}f(y) \text{ for all }x,y\in[n]\cup [n]'
\]
where $f\colon[n]\cup [n]'\to [n]\cup{[n]''}$ is the bijection $x\mapsto x$, $x'\mapsto x''$ for all  $x\in [n]$.
\item Set $(\pi_1;s_1)(\pi_2;s_2)=(\pi;s_1+s_2+\langle\pi_1,\pi_2\rangle)$.
\end{enumerate}

For an illustration, let $n=5$ and consider $(\pi_1;s_1),(\pi_2;s_2)\in\mathcal{P}^\tau_5$ with
\begin{center}
\begin{tikzpicture}
[scale=0.7]
\node[] at (-1.3, -2) {$\pi_1 = $};
\node[] at (-0.3, 0) {$1$};
\node[] at (-0.3, -1) {$2$};
\node[] at (-0.3, -2) {$3$};
\node[] at (-0.3, -3) {$4$};
\node[] at (-0.3, -4) {$5$};
\node[] at (4.4, 0) {$1^\prime$};
\node[] at (4.4, -1) {$2^\prime$};
\node[] at (4.4, -2) {$3^\prime$};
\node[] at (4.4, -3) {$4^\prime$};
\node[] at (4.4, -4) {$5^\prime$};
\foreach \x in {0, 0, 0, 0} \foreach \y in {0, -1, -2, -3, -4} \filldraw (\x,\y) circle (3pt);
\foreach \x in {4, 4, 4, 4} \foreach \y in {0, -1, -2, -3, -4} \filldraw (\x,\y) circle (3pt);
\draw (0, -4) -- (4, 0);
\draw (4, 0) .. controls (3.6, -1) ..  (4, -2);
\draw (0, 0) .. controls (0.3, -0.5) ..  (0, -1);
\draw (0, -1) .. controls (0.3, -1.5) ..  (0, -2);
\draw (0, -2) .. controls (0.3, -2.5) ..  (0, -3);
\node[] at (6, -2) {and};
\node[] at (7.7, -2) {$\pi_2 = $};
\node[] at (8.7, 0) {$1$};
\node[] at (8.7, -1) {$2$};
\node[] at (8.7, -2) {$3$};
\node[] at (8.7, -3) {$4$};
\node[] at (8.7, -4) {$5$};
\node[] at (13.4, 0) {$1^\prime$};
\node[] at (13.4, -1) {$2^\prime$};
\node[] at (13.4, -2) {$3^\prime$};
\node[] at (13.4, -3) {$4^\prime$};
\node[] at (13.4, -4) {$5^\prime$};
\foreach \x in {9, 9, 9, 9} \foreach \y in {0, -1, -2, -3, -4} \filldraw (\x,\y) circle (3pt);
\foreach \x in {13, 13, 13, 13} \foreach \y in {0, -1, -2, -3, -4} \filldraw (\x,\y) circle (3pt);
\draw (9, 0) -- (13, 0);
\draw (13, 0) .. controls (12.7, -0.5) ..  (13, -1);
\draw (9, -2) -- (13, -2);
\draw (9, -1) .. controls (9.3, -1.5) ..  (9, -2);
\draw (9, -3) -- (13, -3);
\end{tikzpicture}
\end{center}
where the partitions are shown as graphs on $[5]\cup [5]'$ whose connected components represent blocks. Then
\begin{center}
\begin{tikzpicture}
[scale=0.7]
\node[] at (-1.3, -2) {$\pi^{\prime \prime} = $};
\node[] at (-0.3, 0) {$1$};
\node[] at (-0.3, -1) {$2$};
\node[] at (-0.3, -2) {$3$};
\node[] at (-0.3, -3) {$4$};
\node[] at (-0.3, -4) {$5$};
\node[] at (4.4, 0.3) {$1^\prime$};
\node[] at (4.4, -0.7) {$2^\prime$};
\node[] at (4.4, -1.7) {$3^\prime$};
\node[] at (4.4, -2.7) {$4^\prime$};
\node[] at (4.4, -3.7) {$5^\prime$};
\node[] at (8.4, 0) {$1^{\prime \prime}$};
\node[] at (8.4, -1) {$2^{\prime \prime}$};
\node[] at (8.4, -2) {$3^{\prime \prime}$};
\node[] at (8.4, -3) {$4^{\prime \prime}$};
\node[] at (8.4, -4) {$5^{\prime \prime}$};
\foreach \x in {0, 0, 0, 0} \foreach \y in {0, -1, -2, -3, -4} \filldraw (\x,\y) circle (3pt);
\foreach \x in {4, 4, 4} \foreach \y in {0, -1, -2, -3} \filldraw (\x,\y) circle (3pt);
\filldraw (4, -4) [thick, gray] circle (3pt);
\foreach \x in {8, 8, 8, 8} \foreach \y in {0, -1, -2, -3, -4} \filldraw (\x,\y) circle (3pt);
\draw (0, -4) -- (4, 0);
\draw (4, 0) .. controls (3.6, -1) ..  (4, -2);
\draw (0, 0) .. controls (0.3, -0.5) ..  (0, -1);
\draw (0, -1) .. controls (0.3, -1.5) ..  (0, -2);
\draw (0, -2) .. controls (0.3, -2.5) ..  (0, -3);
\draw (4, 0) -- (8, 0);
\draw (8, 0) .. controls (7.7, -0.5) ..  (8, -1);
\draw (4, -2) -- (8, -2);
\draw (4, -1) .. controls (4.3, -1.5) ..  (4, -2);
\draw (4, -3) -- (8, -3);
\end{tikzpicture}
\end{center}
and we see that $\langle\pi_1,\pi_2\rangle=1$ as only the singleton light-grey block consists of elements with single prime. Hence
\begin{center}
\begin{tikzpicture}
[scale=0.7]
\node[] at (-2.3, -2) {$\pi^\prime = $};
\node[] at (-1.3, 0) {$1$};
\node[] at (-1.3, -1) {$2$};
\node[] at (-1.3, -2) {$3$};
\node[] at (-1.3, -3) {$4$};
\node[] at (-1.3, -4) {$5$};
\node[] at (3.4, 0) {$1^{\prime \prime}$};
\node[] at (3.4, -1) {$2^{\prime \prime}$};
\node[] at (3.4, -2) {$3^{\prime \prime}$};
\node[] at (3.4, -3) {$4^{\prime \prime}$};
\node[] at (3.4, -4) {$5^{\prime \prime}$};
\foreach \x in {-1, -1, -1, -1} \foreach \y in {0, -1, -2, -3, -4} \filldraw (\x,\y) circle (3pt);
\foreach \x in {3, 3, 3, 3} \foreach \y in {0, -1, -2, -3, -4} \filldraw (\x,\y) circle (3pt);
\draw (-1, -4) -- (3, 0);
\draw (3, 0) .. controls (2.6, -1) ..  (3, -2);
\draw (3, 0) .. controls (2.8, -0.5) ..  (3, -1);
\draw (-1, 0) .. controls (-0.7, -0.5) ..  (-1, -1);
\draw (-1, -1) .. controls (-0.7, -1.5) ..  (-1, -2);
\draw (-1, -2) .. controls (-0.7, -2.5) ..  (-1, -3);
\node[] at (5.7, -2) {and, finally,};
\node[] at (7.7, -2) {$\pi = $};
\node[] at (8.7, 0) {$1$};
\node[] at (8.7, -1) {$2$};
\node[] at (8.7, -2) {$3$};
\node[] at (8.7, -3) {$4$};
\node[] at (8.7, -4) {$5$};
\node[] at (13.4, 0) {$1^\prime$};
\node[] at (13.4, -1) {$2^\prime$};
\node[] at (13.4, -2) {$3^\prime$};
\node[] at (13.4, -3) {$4^\prime$};
\node[] at (13.4, -4) {$5^\prime$};
\foreach \x in {9, 9, 9, 9} \foreach \y in {0, -1, -2, -3, -4} \filldraw (\x,\y) circle (3pt);
\foreach \x in {13, 13, 13, 13} \foreach \y in {0, -1, -2, -3, -4} \filldraw (\x,\y) circle (3pt);
\draw (9, -4) -- (13, 0);
\draw (13, 0) .. controls (12.6, -1) ..  (13, -2);
\draw (13, 0) .. controls (12.8, -0.5) ..  (13, -1);
\draw (9, 0) .. controls (9.3, -0.5) ..  (9, -1);
\draw (9, -1) .. controls (9.3, -1.5) ..  (9, -2);
\draw (9, -2) .. controls (9.3, -2.5) ..  (9, -3);
\end{tikzpicture}
\end{center}
We conclude that $(\pi_1;s_1)(\pi_2;s_2)=(\pi;s_1+s_2+1)$.

The above defined multiplication in $\mathcal{P}^\tau_n$ is associative \cite{Martin94} and its restriction to $\mathcal{B}^\tau_n$ coincides with the multiplication in the twisted Brauer monoid defined in Section~\ref{subsec:twisted}. Hence,  $\mathcal{B}^\tau_n$ is a submonoid in $\mathcal{P}^\tau_n$, and it is easy to see that the identity element of $\mathcal{B}^\tau_n$ serves as the identity element for $\mathcal{P}^\tau_n$ as well. Thus, $\mathcal{P}^\tau_n$ is a monoid called the \emph{twisted partition monoid}.

The machinery developed in the present paper works, with minor adjustments, for twisted partition monoids and yield the following analogue of Theorem~\ref{thm:main}:
\begin{theorem}
\label{thm:partitions}
For each $n\ge 5$, the problem \textsc{Check-Id}$(\mathcal{P}^\tau_n)$ is co-NP-complete.
\end{theorem}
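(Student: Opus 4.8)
The plan is to mirror the proof of Theorem~\ref{thm:main} step by step, replacing the twisted Brauer monoid by the twisted partition monoid throughout. First I would introduce the \emph{$\pm$-twisted partition monoid} $\mathcal{P}^{\pm\tau}_n$ as the set of pairs $(\pi;s)$ with $\pi$ an arbitrary partition of $[n]\cup[n]'$ and $s\in\mathbb{Z}$, equipped with the multiplication $(\pi_1;s_1)(\pi_2;s_2)=(\pi_1\pi_2;s_1+s_2+\langle\pi_1,\pi_2\rangle)$, where $\pi_1\pi_2$ and $\langle\pi_1,\pi_2\rangle$ are computed by the six-step procedure given above. Equivalently, $\mathcal{P}^{\pm\tau}_n$ arises from $\mathcal{P}^\tau_n$ by adjoining a two-sided inverse $d$ of the circle $c$. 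Exactly as in Corollary~\ref{cor:equivalence}, the submonoid $\{c^r\mid r\in\mathbb{Z}_{\ge0}\}$ of $\mathcal{P}^\tau_n$ is isomorphic to the additive monoid of non-negative integers, so $\mathcal{P}^\tau_n$ satisfies only balanced identities; since $d$ is central and generates $\mathcal{P}^{\pm\tau}_n$ together with $\mathcal{P}^\tau_n$, Lemma~\ref{lem:localization} yields that $\mathcal{P}^{\pm\tau}_n$ and $\mathcal{P}^\tau_n$ satisfy precisely the same identities.

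Next I would establish the structural facts about $\mathcal{P}^{\pm\tau}_n$ needed to invoke Theorem~\ref{thm:reduction}: regularity, stability, and finiteness of the set of $\mathscr{J}$-classes. The controlling invariant is now the \emph{rank} of $\pi$, i.e., the number of \emph{transversal} blocks of $\pi$ (blocks meeting both $[n]$ and $[n]'$); this generalizes the number of $t$-wires, since in the Brauer case transversal blocks are exactly the $t$-wires. The rank cannot increase under multiplication, so it induces the $\mathscr{J}$-order, and since it takes only the finitely many values $0,1,\dots,n$, the monoid $\mathcal{P}^{\pm\tau}_n$ has finitely many $\mathscr{J}$-classes. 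Regularity follows by exhibiting, for each $\xi=(\pi;s)$, an explicit inverse built from the reflected partition $\pi^*$ together with a suitable circle count, exactly as in Proposition~\ref{prop:regular} (the partition monoid $\mathcal{P}_n$ being a regular $*$-semigroup under the reflection $\pi\mapsto\pi^*$). Green's relations on $\mathcal{P}_n$ are well known (see, e.g., \cite{Wil07}): two partitions are $\mathscr{L}$-related iff they agree on the right points, $\mathscr{R}$-related iff they agree on the left points, and $\mathscr{J}$-related iff they have the same rank. Lifting these to $\mathcal{P}^{\pm\tau}_n$ by the argument of Proposition~\ref{prop:green} and Corollary~\ref{cor:the same}, and invoking stability of the finite monoid $\mathcal{P}_n$, gives stability of $\mathcal{P}^{\pm\tau}_n$ as in Corollary~\ref{cor:stable}.

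The key computation is the description of the maximal subgroups. By Green's Theorem they are the $\mathscr{H}$-classes containing idempotents, and repeating the bookkeeping of Proposition~\ref{prop:subgroups} shows that for rank $k$ the maximal subgroup splits as $\mathbb{S}_k\times\mathbb{Z}$, the $\mathbb{S}_k$ factor coming from the maximal subgroups of $\mathcal{P}_n$ (which are symmetric groups) and the $\mathbb{Z}$ factor from the twist; the case $k=0$ is covered by the convention of Remark~\ref{rem:S0}. Since every $\mathbb{S}_k\times\mathbb{Z}$ with $k\le n$ embeds into $\mathbb{S}_n\times\mathbb{Z}$, which itself occurs as the rank-$n$ maximal subgroup, the direct product $\mathcal{G}$ of all maximal subgroups of $\mathcal{P}^{\pm\tau}_n$ satisfies exactly the identities of $\mathbb{S}_n\times\mathbb{Z}$; as in the proof of Theorem~\ref{thm:main}, these are precisely the balanced identities of $\mathbb{S}_n$, so \textsc{Check-Id}$(\mathcal{G})$ is co-NP-complete for $n\ge5$ by the Horv\'ath--Lawrence--Merai--Szab\'o theorem.

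Applying Theorem~\ref{thm:reduction} to the stable monoid $\mathcal{P}^{\pm\tau}_n$ then gives co-NP-hardness of \textsc{Check-Id}$(\mathcal{P}^{\pm\tau}_n)$, and the identity-equivalence of $\mathcal{P}^{\pm\tau}_n$ and $\mathcal{P}^\tau_n$ transfers this to \textsc{Check-Id}$(\mathcal{P}^\tau_n)$. Membership in co-NP is verified by the same guess-and-check algorithm as for $\mathcal{B}^\tau_n$, since multiplication in $\mathcal{P}^\tau_n$ is computable in polynomial time. I expect the main obstacle to be the Green-structure bookkeeping for the partition monoid: because blocks may be arbitrarily large rather than of size two, verifying that rank is the correct $\mathscr{J}$-invariant, that each $\xi$ admits an inverse with the right circle count, and that the maximal subgroups are symmetric groups all require somewhat more care than in the Brauer case, even though the final form of the answer is identical.
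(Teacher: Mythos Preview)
Your proposal is correct and follows exactly the route the paper itself indicates: the paper does not prove Theorem~\ref{thm:partitions} but states that ``the machinery developed in the present paper works, with minor adjustments, for twisted partition monoids'' and defers the detailed proof to a separate publication. Your sketch supplies precisely those adjustments---replacing the number of $t$-wires by the rank, using the known Green structure and $*$-regularity of $\mathcal{P}_n$, and lifting everything to $\mathcal{P}^{\pm\tau}_n$ via Lemma~\ref{lem:localization} and Theorem~\ref{thm:reduction}---so there is nothing to add.
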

The detailed proof of Theorem~\ref{thm:partitions} will be published elsewhere.

Similar results can be obtained for various submonoids of $\mathcal{P}^\tau_n$, provided that they share the structure of their maximal subgroups with $\mathcal{P}^\tau_n$ and $\mathcal{B}^\tau_n$. For instance, an analogue of Theorems~\ref{thm:main} and~\ref{thm:partitions} holds for the \emph{twisted partial Brauer monoid}, that is, the submonoid of  $\mathcal{P}^\tau_n$ formed by all pairs $(\pi;s)$ such that each block of the partition $\pi$ consists of \emph{at most} two elements (as opposed to \emph{exactly} two elements for the case of $\mathcal{B}^\tau_n$).

\subsection{Open questions} We have already mentioned that the question of the complexity of identity checking in the monoid $\mathcal{B}^\tau_3$ is left open. Another interesting question concerns the complexity of identity checking in the Kauffman monoids $\mathcal{K}_n$ with $n\ge 5$ (For $n\le 4$, the problem \textsc{Check-Id}$(\mathcal{K}_n)$ is known to be polynomial time decidable; see~\cite{Chen20,KV20}.) The approach of the present paper does not apply to Kauffman monoids since their subgroups are trivial. So, some fresh ideas are needed to handle this case.

Another natural family of submonoids in twisted Brauer monoids is the twisted version of Jones's \emph{annular monoids} \cite{Jones94annular}. This version comes from the representation of partitions of $[n]\cup[n]'$ by annular rather than rectangular diagrams. Map the elements of $[n]$ to the $n$-th roots of unity doubled and the elements of $[n]'$ to the $n$-th roots of unity:
\[
k\mapsto 2e^{\frac{2\pi i(k-1)}{n}}\ \text{ and }\ k'\mapsto e^{\frac{2\pi i(k-1)}{n}}\ \text{ for all }\ k\in [n].
\]
Then the wires of any partition of $[n]\cup[n]'$ can be drawn in the complex plane  as lines within the annulus $\{z\mid 1<|z|<2\}$ (except for their endpoints).
For example, the annular diagram in Fig.\,\ref{fig:annular diagram} (taken from \cite{ADV12}) represents the partition $\{\{1,1'\},\{2,4\},\{3,2'\},\{3',4'\}\}$.
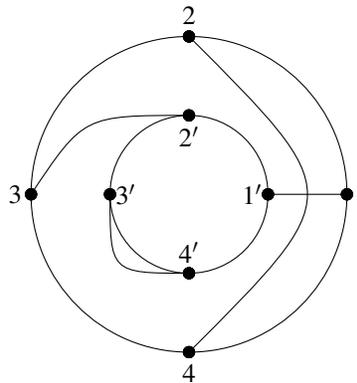
\begin{figure}[ht]
\centering
\begin{tikzpicture}
[scale=0.7]
\draw  (0,0) circle(1.5) circle(3);
\foreach \x in {-3, -1.5, 1.5, 3} \foreach \y in {0, 0, 0, 0} \filldraw (\x,\y) circle (3pt);
\foreach \x in {0, 0, 0, 0} \foreach \y in {-3, -1.5, 1.5, 3} \filldraw (\x,\y) circle (3pt);
\draw (1.5, 0) -- (3, 0);
\draw (0, 3) .. controls (3, 0) ..  (0, -3);
\draw (-3, 0) .. controls (-2, 1.5) ..  (0, 1.5);
\draw (-1.5, 0) .. controls (-1.5, -1.5) ..  (0, -1.5);
\node[] at (1.2, 0) {$1^\prime$};
\node[] at (3.3, 0) {$1$};
\node[] at (-1.2,0) {$3^\prime$};
\node[] at (-3.3,0) {$3$};
\node[] at (0, 1.1) {$2^\prime$};
\node[] at (0, 3.4) {$2$};
\node[] at (0, -1.1) {$4^\prime$};
\node[] at (0, -3.4) {$4$};
\end{tikzpicture}
\caption{Annular diagram of a partition of $[4]\cup[4]'$}\label{fig:annular diagram}
\end{figure}

The \emph{twisted annular monoid} $\mA^\tau_n$ is the submonoid of $\mathcal{B}^\tau_n$ consisting of all elements whose partitions have a representation as an annular diagram whose wires do not cross.
The subgroups of $\mA^\tau_n$ are known to be finite and cyclic, and checking identities in finite cyclic groups is easy: an identity $w\bumpeq w'$ holds in the cyclic group of order $m$ if and only if for every letter in $\alf(ww')$, the number of its occurrences in $w$ is congruent modulo $m$ to the number of its occurrences in $w'$. Thus, the problem \textsc{Check-Id}$(\mathcal{A}^\tau_n)$ also cannot be handled with the approach of the present paper and needs new tools.

\medskip

\paragraph{\emph{Acknowledgement}.} The authors are extremely grateful to the anonymous referee who spotted several inaccuracies and suggested shorter alternative proofs of Corollary~\ref{cor:stable} and Proposition~\ref{prop:subgroups} incorporated in the present version.

\end{document}